\tikzset{->-/.style={decoration={
  markings,
  mark=at position .5 with {\arrow[scale = 1.5]{stealth}}},postaction={decorate}}}
\newtheorem{theorem}{Theorem}[section]        
\newtheorem{lemma}[theorem]{Lemma}			  
\newtheorem{corollary}[theorem]{Corollary}    
\theoremstyle{definition}
\newtheorem{example}[theorem]{Example}
\newtheorem{remark}[theorem]{Remark}	
\newcommand{\B}{\mathcal{B}}
\newcommand{\A}{\mathcal{A}}
\newcommand{\cC}{\mathcal{C}}
\newcommand{\R}{\mathbb{R}}
\newcommand{\cH}{\mathcal{H}}
\newcommand{\G}{\mathcal{G}}
\newcommand{\cS}{\mathcal{S}}
\newcommand{\cT}{\mathcal{A}}
\newcommand{\ccP}{\mathscr{P}}
\renewcommand{\O}{{\rm{O}}}
\newcommand{\rr}{\mathbb{S}}
 \newcommand{\one}{\mathbf{1}}
 \def\Plus{\texttt{\raisebox{-1.48ex}{\scalebox{1}{+}}}}
\def\Minus{\texttt{\raisebox{-1.48ex}{\scalebox{1}{-}}}}
\newcommand\oplu{\stackMath\mathbin{\stackinset{c}{0ex}{c}{0ex}{\normalfont\Plus}{\bigcirc}}}
\newcommand\omin{\stackMath\mathbin{\stackinset{c}{0ex}{c}{0ex}{\normalfont\Minus}{\bigcirc}}}
 \newcommand\ostar{\stackMath\mathbin{\stackinset{c}{0ex}{c}{0ex}{\normalfont\ast}{\bigcirc}}}
  \newcommand{\sstar}{*}
\title{Sign patterns which require or allow the strong multiplicity property}
\author{
Abhilash Saha
\thanks{{Department of Mathematics and Statistics, Indian Institute of Science Education and Research Kolkata, Mohanpur, WB, 741 246, India}
(as21ms054@iiserkol.ac.in).
Research supported in part by MITACS GRI.}
\and
Leona Tilis
\thanks{Department of Mathematics and Statistics,
McMaster University, Hamilton, ON, L8S 4L8, Canada 
(tilisl@mcmaster.ca, vantuyla@mcmaster.ca).
Research supported in part by NSERC USRA (Tilis) and NSERC Discovery Grant 2024--05299 (Van Tuyl).}
\and
Kevin N.\ Vander Meulen
\thanks{Department of Mathematics, Redeemer University, Ancaster, ON, L9K 1J4, Canada
(kvanderm@redeemer.ca). Research supported in part by NSERC Discovery Grant 2022--05137.}
\and
{Adam Van Tuyl\footnotemark[2]}
}
\providecommand{\keywords}[1]{{\textit{Keywords and phrases:}} #1}
\providecommand{\subjclass}[1]{{\textit{AMS Subject [2020]:}} #1}
\date{\today}
\begin{document}
\maketitle 
\begin{abstract}
    We initiate a study of sign patterns that require or allow 
    the non-symmetric strong multiplicity property (nSMP). 
    We show that all cycle patterns require the nSMP, regardless of the 
    number of nonzero diagonal entries. We present a class of 
    Hessenberg patterns that require the nSMP. We characterize which star 
    sign patterns require, which allow, and which do not allow the nSMP. 
    We show that if a pattern requires distinct eigenvalues, then it requires 
    the nSMP. Further, we characterize the patterns that allow the nSMP as being 
    precisely the set of patterns that allow distinct eigenvalues, a property 
    that corresponds to 
    a simple feature 
    of the associated digraph.
    We also characterize the sign patterns of order at most three according to whether they require, allow, or do not allow the nSMP.
\end{abstract}

\medskip
\subjclass{15B35, 15A18, 05C50} 

\noindent
\keywords{Sign pattern matrices, Non-symmetric strong multiplicity property, Distinct eigenvalues, Digraphs}

\tikzstyle{place}=[circle,draw=black!100,fill=black!100,thick,inner sep=0pt,minimum size=1mm]
\tikzstyle{left}=[>=latex,<-,semithick]
\tikzstyle{right}=[>=latex,->,semithick]
\tikzstyle{nleft}=[>=latex,-,semithick]
\tikzstyle{nright}=[>=latex,-,semithick]
\tikzstyle{right2}=[-,semithick]

\section{Introduction}
A real $n\times n$ matrix $A$ has the \emph{non-symmetric strong multiplicity property} (nSMP) if the system with conditions
\begin{eqnarray}
    A\circ X&=&\O,\label{eq:had}\\ 
    AX^T-X^TA&=&\O,\qquad {\rm{and}}\label{eq:commute}\\
    {\rm{tr}}(X^TA^k)&=&0,\  \qquad {\rm{for}} \ \ 0\leq k<n, \label{eq:trace}
    \end{eqnarray}
has only the trivial solution $X=\O$, where $\circ$ represents the Hadamard (entrywise) product and $\O$ is the zero matrix. A matrix $A$ has the \emph{non-symmetric strong spectral property} (nSSP) if the two conditions (\ref{eq:had}) and (\ref{eq:commute}) are sufficient to guarantee that $X=\O$.
Consequently, if $A$ has the nSSP then $A$ has the nSMP.

The non-symmetric strong multiplicity property was developed in \cite{CGSV}, building upon previous work on the non-symmetric strong spectral property  in \cite{bifurcate}. If a pattern allows the nSMP and a particular matrix $A$ with the pattern has the nSMP, then there are  implications for the eigenvalue multiplicity lists for superpatterns of the pattern (see e.g.  Theorem~\ref{thm:bifurcation}). The nSMP can be helpful in characterizing the allow sequence of
distinct eigenvalues of various matrix sign patterns (see \cite{BBCCVV} and \cite{CGSV}). 
The nSMP was used in \cite{CCCV} to help characterize the allow sequence of distinct eigenvalues of various {nonzero} matrix patterns. A key issue for these types of problems is to be able to recognize if a particular matrix has the nSMP. In this paper, we lay the groundwork for determining which patterns require the nSMP, and which sign patterns do not allow the nSMP. We are able to characterize the latter, and provide classes of patterns that satisfy the former.

In Section~\ref{sec:def}, we describe relevant definitions and some intermediate  results regarding the nSMP. In Section~\ref{sec:hollow},
we observe that any pattern that has all  of its off-diagonal entries nonzero, will require the nSMP.
In Section~\ref{sec:cycle}, we characterize the sign patterns that require the nSMP among the patterns whose digraph is a cycle. 
In Section~\ref{sec:star},
we characterize star digraphs that represent a sign pattern that requires the nSMP.
   Section~\ref{sec:hessenberg} describes a family
of Hessenberg patterns that requires the nSMP.
In Section~\ref{sec:hessy}, we answer an open question from \cite{CGSV}, showing that every pattern that requires distinct eigenvalues also requires the nSMP. We also demonstrate that a pattern allows distinct eigenvalues if and only if it allows the nSMP. We note that such patterns are simple to recognize from their digraph, based on the maximum size of a largest composite cycle.
In Section~\ref{sec:allows}, we describe some constructions of patterns that allow but do not require the nSMP. We also observe that while every spectrally arbitrary pattern allows the nSMP, there exists some spectrally arbitrary patterns that do not require the nSMP.
Then, in Section~\ref{sec:small}, we characterize small order patterns according to the nSMP and make some observations about reducible patterns.
We finish with some concluding comments and questions in Section~\ref{sec:conclude}.


\section{Terms and preliminary observations}\label{sec:def}

A \emph{sign pattern} $\mathcal{A}$ is an $n\times n$ matrix with entries in $\{+,-,0\}$. 
The \emph{qualitative class} of
$\cT$, denoted $Q(\cT)$, is the set of all
real matrices $A$ such that $A_{ij}>0$ if $\cT_{ij}=+$, $A_{ij}<0$ if $\cT_{ij}=-$, and $A_{ij}=0$ if $\cT_{ij}=0$. A \emph{nonzero pattern} $\cT$ is an $n\times n$ matrix with entries in $\{0,*\}$ 
and $Q(\cT)$ is the set of real matrices $A$ with $A_{ij}\neq 0$ if and only if $\cT_{ij}\neq 0$.
Matrices in $Q(\A)$ are called \emph{realizations} of $\A$.
A \emph{superpattern} of a sign (or nonzero) pattern $\cT$ is a pattern $\A'$ such that $\A'_{ij}=\cT_{ij}$ whenever $\cT_{ij}\neq 0$.
A pattern $\cT$ is \emph{sign-nonsingular} if $\det(A)\neq 0$ for every $A\in Q(\cT).$ The symbols $\ostar$, 
$\omin$, 
and $\oplu$ represent an arbitrary real number, a nonpositive real number, and a nonnegative real number, respectively, in a matrix.

A pattern $\cT$ \emph{allows} a particular matrix property $\ccP$, if there exists  a real matrix $A\in Q(\cT)$ such that $A$ has the property $\ccP$. Likewise, a pattern
$\A$ \emph{requires} a property $\ccP$ if every matrix $A\in Q(\A)$ has property $\ccP$. 
 By definition, every pattern that allows (respectively requires) the nSSP also allows (resp. requires) the nSMP.

\begin{example}\label{ex:first} Let 
$$A=\left[ \begin{array}{rr}
1&0\\
-3&2\end{array}
\right] \qquad
~~\mbox{and}~~
\cT=\left[ \begin{array}{cc}
+&0\\
-&+\end{array}
\right].
$$
Then $A\in Q(\cT).$
Further, one can check that $A$ has the nSMP: if $A\circ X=\O$, 
then $X_{11}=X_{21}=X_{22}=0$.  But then
$(X^TA-AX^T)_{21}=X_{12}$,  and thus $X_{12}=0$ if $X^TA-AX^T=\O$. 
Therefore $X=\O$ will be the only solution to the nSMP equations. In fact, $A$ has the nSSP.

For a second example in $Q(\A)$, let 
$$B=\left[ \begin{array}{rr}
1&0\\
-3&1\end{array}
\right] 
\qquad ~~\mbox{and}~~
X=\left[ \begin{array}{cc}
0&4\\
0&0\end{array}
\right]. \qquad 
$$
Then $B\circ X=\O$,
$X^TB-BX^T=\O$, and
${\rm tr}(X^T)={\rm tr}(X^TB)={\rm tr}(X^TB^2)=0.$ Thus
$B$ does not have the nSMP (and hence also does not have the nSSP). 
We conclude that $\cT$ does not require the nSMP, even though $\cT$ does allow the nSMP. 

Taking the pattern $\cT$ in Example~\ref{ex:first} and replacing the zero entry with a signed entry gives a proper superpattern of $\cT$ that
requires the nSMP since the Hadamard product condition (\ref{eq:had}) forces the trivial solution
in this case.
\end{example}

A matrix $B$ is \emph{permutationally similar} to $A$ if
there is a permutation matrix $P$ such that $B=P^{-1}AP.$
A matrix $B$ is \emph{diagonally similar} to $A$ if there is a diagonal matrix
$D$ such that $B = D^{-1}AD$.
A pattern $\A$ is \emph{signature similar} to $\B$ if there is a diagonal matrix $D$ with entries $\pm 1$ on the diagonal such that 
$DAD\in Q(\B)$ for every $A\in Q(\A)$. Note that $D^{-1}=D$ in this case.  

\begin{lemma}\cite{CGSV}\label{lem:equiv}
If $B$ is a matrix obtained from $A$ via permutation similarity, diagonal similarity, 
nonzero scalar multiplication and/or transposition, then $B$ will have the nSMP if $A$ has the nSMP.
\end{lemma}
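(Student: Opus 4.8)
The plan is to verify the claim one operation at a time, since the four operations (permutation similarity, diagonal similarity, nonzero scalar multiplication, transposition) compose, and it suffices to show that each individually preserves the nSMP. For each operation I would exhibit an explicit bijection $X \mapsto Y$ on the space of candidate matrices such that $(A,X)$ satisfies the system \eqref{eq:had}--\eqref{eq:trace} if and only if $(B,Y)$ does; then $X=\O \iff Y=\O$ gives the conclusion. First I would handle \emph{permutation similarity}: if $B = P^{-1}AP = P^T A P$, set $Y = P^T X P$. Conjugation by $P$ permutes rows and columns in tandem, so $B\circ Y = P^T(A\circ X)P$, which is $\O$ iff $A\circ X=\O$. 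Since $Y^T = P^T X^T P$, we get $BY^T - Y^TB = P^T(AX^T - X^TA)P$, and $\mathrm{tr}(Y^TB^k) = \mathrm{tr}(P^T X^T A^k P) = \mathrm{tr}(X^T A^k)$ by cyclic invariance of trace; so all three conditions transfer.

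Next, \emph{diagonal similarity}: if $B = D^{-1}AD$ with $D$ invertible diagonal, set $Y = D^T X D^{-T} = D X D^{-1}$ (as $D$ is diagonal, $D^T=D$), so $Y = DXD^{-1}$. Because $D$ is diagonal, $B_{ij} = D_{ii}^{-1} A_{ij} D_{jj}$ and $Y_{ij} = D_{ii} X_{ij} D_{jj}^{-1}$ have zero patterns matching those of $A$ and $X$; hence $B\circ Y=\O \iff A\circ X=\O$. For \eqref{eq:commute}, note $Y^T = D^{-1} X^T D$, so $BY^T - Y^TB = D^{-1}(AX^T - X^TA)D$. For \eqref{eq:trace}, $\mathrm{tr}(Y^T B^k) = \mathrm{tr}(D^{-1}X^T D \cdot D^{-1}A^k D) = \mathrm{tr}(X^T A^k)$. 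So again the systems correspond. \emph{Nonzero scalar multiplication} $B = cA$, $c\neq 0$: take $Y = X$ (or $Y = X$ unchanged). Then $B\circ Y = c(A\circ X)$, $BY^T - Y^TB = c(AX^T - X^TA)$, and $\mathrm{tr}(Y^T B^k) = c^k \mathrm{tr}(X^T A^k)$; each is $\O$ (resp.\ $0$) iff the $A$-version is, since $c\neq 0$.

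The one operation needing slightly more care — and the main obstacle — is \emph{transposition}: $B = A^T$. Here the natural correspondence is $Y = X^T$. Then $B\circ Y = A^T \circ X^T = (A\circ X)^T$, so \eqref{eq:had} transfers. For \eqref{eq:commute}: $BY^T - Y^TB = A^T X - X A^T = -(XA^T - A^TX) = -(X^T A - A X^T)^T$... wait, more directly $A^T X - X A^T = (X^T A - A X^T)^T \cdot(-1)$; in any case $A^T X - X A^T = -\big(A X^T - X^T A\big)^T$? Let me instead record it cleanly: $(AX^T - X^TA)^T = X A^T - A X^T = -(A X^T - ... )$. The point is simply that $A^T X - X A^T$ is $\pm$ the transpose of $A X^T - X^T A$, so one vanishes iff the other does; I would write this out carefully to fix the sign. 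For \eqref{eq:trace}, $\mathrm{tr}(Y^T B^k) = \mathrm{tr}\big((X^T)^T (A^T)^k\big) = \mathrm{tr}(X (A^k)^T) = \mathrm{tr}\big((A^k X^T)^T\big)$; and $\mathrm{tr}(A^k X^T) = \mathrm{tr}(X^T A^k)$ by cyclic invariance, so $\mathrm{tr}(Y^T B^k) = \mathrm{tr}(X^T A^k)$. Hence all three conditions again correspond, and $Y = \O \iff X = \O$. Combining the four cases and composing finishes the proof. The only genuinely error-prone point is bookkeeping the transposes in \eqref{eq:commute} under the transposition operation; everything else is a direct substitution plus cyclicity of trace.
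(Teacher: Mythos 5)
Your proof is correct: the operation-by-operation verification (exhibiting for each of permutation similarity, diagonal similarity, scaling, and transposition an explicit bijection $X\mapsto Y$ under which the three conditions \eqref{eq:had}--\eqref{eq:trace} transfer) is the standard argument, and your sign in the transposition case does work out, since $BY^T-Y^TB=A^TX-XA^T=-\left(AX^T-X^TA\right)^T$. The paper itself gives no proof of this lemma --- it is cited from \cite{CGSV} --- so there is nothing to compare against beyond noting that your argument is the expected one.
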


Given Lemma~\ref{lem:equiv}, we say that two sign patterns are \emph{equivalent} if one can be obtained from the other via permutation similarity, signature similarity, transposition and/or negation (a scalar multiplication).

The digraph $D(\A)$ of an $n\times n$ matrix pattern $\A$ is obtained by viewing the matrix as an adjacency matrix for a digraph
on $n$ vertices $v_1,\ldots, v_n$ such that there is a signed arc
from $v_i$ to $v_j$ if and only if $\A_{ij}\neq 0$, with the sign of the arc being $\A_{ij}$. Based on Lemma~\ref{lem:equiv}, if two digraphs are isomorphic, then the corresponding sign patterns are equivalent. In addition, reversing all the arcs of a digraph will also give an equivalent sign pattern.

A directed (or simple) \emph{$k$-cycle} is a digraph on vertices 
$v_1,\ldots, v_k$ with the set of arcs
$$\{(v_1,v_2), (v_2,v_3), \ldots, (v_{k-1},v_k),  (v_k,v_1)\}.$$ The \emph{sign} of a cycle is the product of the signs of the arcs of cycle. A \emph{composite $k$-cycle} (or composite cycle of length $k$) in a digraph is a collection of disjoint cycles on exactly $k$ vertices.
A $1$-cycle is often referred to as a \emph{loop} in the digraph and corresponds to a nonzero diagonal entry in the corresponding matrix. 

A matrix $A$ is \emph{reducible} if it is permutationally similar to a block triangular matrix, that is, if
\begin{equation}\label{eq:red}
P^TAP=\left[
\begin{array}{c|c}
A_1&B \\ \hline
\O&A_2
\end{array}\right]
\end{equation}
for some permutation matrix $P$ and square matrices
$A_1$ and $A_2$ (called \emph{diagonal blocks} of $A$). 
If a matrix is not reducible, then it is \emph{irreducible}. Note that the eigenvalues of $A$ are simply the eigenvalues of $A_1$ and $A_2$
if $A$ satisfies (\ref{eq:red}).
It is a known fact (see e.g. \cite{HJ}) that $A$ is irreducible if and only if the digraph of $A$ is strongly connected, that is, there is a directed path between
any pair of vertices in the graph.

\begin{theorem}
\cite{CGSV}\label{thm:reducible}
Suppose $A$ is reducible as in (\ref{eq:red}). 
Then $A$ has the nSMP if and only if 
$A_1$ and $A_2$ have no eigenvalues in common and
both $A_1$ and $A_2$ have the nSMP.
\end{theorem}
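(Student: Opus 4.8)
The plan is to analyze the nSMP system (\ref{eq:had})--(\ref{eq:trace}) for $A$ in the block-triangular form (\ref{eq:red}) by writing the unknown $X$ in a conformal block structure $X=\begin{bmatrix} X_1 & Y \\ Z & X_2\end{bmatrix}$ and tracking what each of the three conditions forces on the blocks. Since the zero entries of $A$ in (\ref{eq:red}) occupy the lower-left block, the Hadamard condition (\ref{eq:had}) says that $X_i\circ A_i=\O$ for $i=1,2$, that $Y\circ B=\O$, and that $Z$ is completely unconstrained by (\ref{eq:had}). So the content of the theorem is really: the only way to kill off the ``free'' block $Z$ (and the coupling block $Y$), using (\ref{eq:commute}) and (\ref{eq:trace}), is precisely when $A_1,A_2$ share no eigenvalue, and moreover once $Z=\O$ and $Y=\O$ are forced, the residual system on $(X_1,X_2)$ is exactly the union of the two nSMP systems for $A_1$ and $A_2$.

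First I would expand the commutator condition $AX^T - X^TA=\O$ blockwise. With $X^T=\begin{bmatrix} X_1^T & Z^T \\ Y^T & X_2^T\end{bmatrix}$, the $(2,1)$-block of $AX^T-X^TA$ involves only $A_2$, $Z^T$, $A_1$ (the $B$-terms land in other blocks), giving an equation of Sylvester type $A_2 Z^T - Z^T A_1 = \O$; similarly the $(1,2)$-block couples $Y^T$ with $A_1,A_2,B,Z$. The standard fact is that the Sylvester operator $W\mapsto A_2 W - W A_1$ is nonsingular iff $\sigma(A_1)\cap\sigma(A_2)=\varnothing$. So when the spectra are disjoint, $Z=\O$ immediately, which then collapses the $(1,2)$-block equation and the trace conditions; I would then show the remaining equations on the diagonal blocks are $A_i\circ X_i=\O$, $A_iX_i^T-X_i^TA_i=\O$, and $\operatorname{tr}(X_i^TA_i^k)=0$ for $0\le k<n_i$ (using that $A^k$ is block-triangular with diagonal blocks $A_1^k,A_2^k$, and that once $Z=Y=\O$ the trace of $X^TA^k$ splits as $\operatorname{tr}(X_1^TA_1^k)+\operatorname{tr}(X_2^TA_2^k)$; the trace powers up to $n_i-1$ suffice via Cayley--Hamilton, absorbing the higher powers). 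Conversely, if $A_1$ and $A_2$ have a common eigenvalue, or one of them fails the nSMP, I would build an explicit nonzero solution $X$: in the shared-eigenvalue case take $X_1=X_2=\O$, $Y=\O$, and $Z^T$ a nonzero element of the Sylvester kernel (one checks the trace conditions hold automatically because $X^TA^k$ is then strictly block-triangular, hence traceless); in the case where $A_i$ fails its own nSMP, lift a witness $X_i$ by padding with zeros, noting the padding does not disturb (\ref{eq:had}) (the off-diagonal block positions of $A$ are either zero or, for $B$, irrelevant since we put zeros there) nor the commutator, nor the traces.

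The main obstacle I anticipate is the bookkeeping in the ``easy'' direction when the spectra are disjoint: after forcing $Z=\O$ one must verify that the coupling block $Y$ is also forced to zero and, more delicately, that the trace conditions genuinely decouple and reduce to only $n_i$ conditions per block rather than $n$. The cleanest route is probably to first dispatch $Z$, observe $A$ is then similar (via a block-diagonal conjugation solving the remaining Sylvester equation) to $\operatorname{diag}(A_1,A_2)$ while preserving all three nSMP conditions up to the equivalences in Lemma~\ref{lem:equiv} — except that block-diagonal similarity is not one of the listed equivalences, so instead I would argue directly on the equations, using that $Y\circ B=\O$ together with the $(1,2)$ commutator equation (which, with $Z=\O$, becomes $A_1Y^T - Y^T A_2 = B^T X_2^T - X_1^T B^T$ or similar) does not by itself kill $Y$; one then needs the trace conditions, or a further argument, to pin $Y$ down. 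Handling this $Y$-block carefully is where the real work lies, and I expect the argument to mirror the proof that a block-triangular matrix with disjoint-spectrum diagonal blocks has the same ``commutant-type'' structure as its block-diagonalization.
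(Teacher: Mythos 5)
The paper does not actually prove Theorem~\ref{thm:reducible}; it quotes it from \cite{CGSV}, so your argument can only be judged on its own terms. Your architecture (conformal blocks, Sylvester operators, Cayley--Hamilton for the traces) is the right one, but the block bookkeeping is scrambled in a way that breaks the logic. Writing $X^T=\bigl[\begin{smallmatrix} X_1^T & Z^T \\ Y^T & X_2^T\end{smallmatrix}\bigr]$, one computes $(AX^T-X^TA)_{21}=A_2Y^T-Y^TA_1$ and $(AX^T-X^TA)_{12}=A_1Z^T-Z^TA_2+BX_2^T-X_1^TB$. So the \emph{homogeneous} Sylvester equation sits in the $(2,1)$ block, and when $\sigma(A_1)\cap\sigma(A_2)=\varnothing$ it is $Y$ that dies immediately, not $Z$; the equation governing the genuinely unconstrained block $Z$ is inhomogeneous and can only be used at the very end, after $X_1=X_2=\O$ has been established, to conclude $Z=\O$. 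Your proposed order (``$Z=\O$ immediately, then worry about $Y$'') cannot be executed. The same $B$-term defeats your converse construction: padding a witness $X_1$ for $A_1$ with zeros leaves $(AX^T-X^TA)_{12}=-X_1^TB$, which need not vanish; you must in addition take $Z^T$ to be the solution of $A_1Z^T-Z^TA_2=X_1^TB$, which exists because in this case one may assume the spectra are disjoint (otherwise the shared-eigenvalue witness, which you do construct correctly, already applies).

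The second gap is the one you flag but do not close: once $Y=\O$, the traces split as ${\rm tr}(X^TA^k)={\rm tr}(X_1^TA_1^k)+{\rm tr}(X_2^TA_2^k)$, but the hypotheses only give the vanishing of the \emph{sum} for $0\le k<n$, whereas invoking the nSMP of $A_i$ requires the vanishing of each summand separately. This is where spectral disjointness enters a second time: setting $f(k)={\rm tr}(X_1^TA_1^k)$ and $g(k)={\rm tr}(X_2^TA_2^k)$, each sequence satisfies the linear recurrence with characteristic polynomial $p_{A_1}$, resp.\ $p_{A_2}$, and the sum vanishes for all $k\ge 0$ by Cayley--Hamilton; since $f=-g$ then satisfies both recurrences and $\gcd(p_{A_1},p_{A_2})=1$ when the spectra are disjoint, $f$ and $g$ vanish identically. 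Without some such argument the forward direction is incomplete, and this is precisely the step where a common eigenvalue would let a nonzero solution survive.
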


Let $q(A)$ denote the number of distinct eigenvalues of the matrix $A$.
The following result is direct consequence of the bifurcation theorem for nSMP in \cite{CGSV}. 

\begin{theorem}\cite{CGSV}\label{thm:bifurcation}
If $A\in Q(\A)$ is an $n\times n$ matrix that has the nSMP,
then $\A$ allows 
$n$ distinct eigenvalues. If, in addition, $A$ has a repeated real eigenvalue, then
$\A$ allows 
$k$ distinct eigenvalues for every $k$ with $q(A)\leq k\leq n$. 
\end{theorem}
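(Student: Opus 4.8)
The plan is to derive this as a direct consequence of the bifurcation theorem for the nSMP established in \cite{CGSV}, unwinding what that theorem gives when it is applied to a single matrix $A\in Q(\A)$ that has the nSMP. First I would recall the precise statement of the bifurcation theorem from \cite{CGSV}: it says, roughly, that if $A$ has the nSMP and $\lambda$ is an eigenvalue of $A$ of algebraic multiplicity $m\geq 2$ (with $\lambda$ real, so that the relevant perturbations stay within the real matrices), then for every way of splitting $m$ into a sum of smaller positive integers there is a matrix $A'$ in $Q(\A)$ — in fact arbitrarily close to $A$ — whose characteristic polynomial has $\lambda$ replaced by the corresponding cluster of eigenvalues, while all other eigenvalues of $A$ are perturbed only slightly and remain distinct from each other and from the new cluster. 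In particular, each eigenvalue of multiplicity $m$ can be ``bifurcated'' into anywhere from $1$ up to $m$ distinct eigenvalues, independently of the other eigenvalues.

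The first conclusion, that $\A$ allows $n$ distinct eigenvalues, then follows by bifurcating \emph{every} repeated eigenvalue of $A$ as far as it will go. If $A$ has a repeated complex eigenvalue $\lambda$ (necessarily paired with $\bar\lambda$), I would note that the bifurcation theorem of \cite{CGSV} still applies to split the pair of conjugate blocks into distinct conjugate pairs; the point is only that the perturbed matrix must stay real, and the construction in \cite{CGSV} is set up to respect this. Applying the bifurcation simultaneously (or iteratively, using that the perturbations can be taken arbitrarily small so that eigenvalues that were already distinct stay distinct and separated) to each eigenvalue of multiplicity $\geq 2$ produces a realization with all $n$ eigenvalues distinct, which is exactly the statement that $\A$ allows $n$ distinct eigenvalues.

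For the second conclusion, suppose in addition that $A$ has a repeated \emph{real} eigenvalue $\lambda$, of multiplicity $m\geq 2$. Then $q(A)\leq n$, and $q(A)+m-1\geq n$ need not hold, so a single eigenvalue may not be enough to reach every intermediate value; the idea is to distribute the required ``extra'' distinctness among the repeated eigenvalues. Fix $k$ with $q(A)\leq k\leq n$. Starting from $A$, I would perturb so as to increase the number of distinct eigenvalues by exactly $k-q(A)$: process the repeated eigenvalues of $A$ one at a time, and for a repeated eigenvalue of multiplicity $m_i$ use the bifurcation theorem to split it into $\min(m_i,\,1+(\text{amount still needed}))$ distinct eigenvalues, stopping once the running total of newly created distinct eigenvalues reaches $k-q(A)$. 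Since the total number of eigenvalues counted with multiplicity is $n$ and $q(A)$ of them are already distinct, the sum of $(m_i-1)$ over repeated eigenvalues is exactly $n-q(A)\geq k-q(A)$, so there is enough room to stop precisely at $k$; the hypothesis that at least one repeated eigenvalue is real guarantees the process is nonvacuous whenever $k>q(A)$, and real bifurcations let us land on any integer increment rather than being forced to move in steps of two. This yields a realization with exactly $k$ distinct eigenvalues.

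The main obstacle, and the only place where real care is needed, is bookkeeping the simultaneous perturbations: one must be sure that bifurcating one repeated eigenvalue does not inadvertently merge two previously distinct eigenvalues or collapse a cluster, and that a complex eigenvalue is only split in conjugate-symmetric fashion. Both points are handled by invoking the quantitative ``arbitrarily small perturbation'' form of the bifurcation theorem in \cite{CGSV} and a standard continuity/induction argument: perform the bifurcations in sequence, at each stage choosing the perturbation small enough (relative to the current minimum gap between distinct eigenvalues) that all previously separated eigenvalues stay separated. Everything else is immediate from \cite{CGSV}.
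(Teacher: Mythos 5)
Your proposal is correct and matches the paper's treatment: the paper gives no proof of its own, stating only that the result is a direct consequence of the bifurcation theorem for the nSMP in \cite{CGSV}, and your argument is exactly the natural unwinding of that citation. In particular, you correctly identify the parity obstruction for non-real conjugate pairs (which can only change the number of distinct eigenvalues in even increments) as the reason the repeated \emph{real} eigenvalue hypothesis is needed in the second statement, which is precisely the correction the paper notes was missing from the original formulation in \cite{CGSV}.
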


Note that the conclusion of the second statement of Theorem~\ref{thm:bifurcation} was incorrectly given in \cite{CGSV} without the hypothesis of a repeated real eigenvalue.

\section{Patterns that require the nSMP.}

As already noted, any pattern that requires the nSSP will also require the nSMP. In \cite{Li}, the \emph{similarity-transversality property} (STP) was introduced and shown to be equivalent to the nSSP. As such, some patterns in \cite{Li} were demonstrated to require the nSSP, such as any full tridiagonal pattern, any full Hessenberg pattern, or any full backward upper triangular pattern. In this section we develop other patterns that require the nSMP.

\subsection{Patterns with all off-diagonal entries nonzero}\label{sec:hollow}
In this subsection, we note that if $\A$ is a sign pattern for which $\mathcal{D}(\A)$
is a directed complete graph (which  may or may not have some loops),
then $\A$ requires the nSMP.

\begin{lemma}\label{lem:partial}
Suppose $\A$ is an $n\times n$ matrix with $n\geq 2$ such that for some $i\neq j$, row  and column $i$ of $\A$, as well as row and column $j$ of $\A$ have exactly one zero entry, and  $\A_{ii}=\A_{jj}=0$.   
Suppose $A\in Q(\A)$.  If $A\circ X=\O$ and
$X^TA-AX^T=\O$,
then $X_{ii}=X_{jj}$
\end{lemma}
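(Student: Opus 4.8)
The hypotheses say rows and columns $i$ and $j$ each have exactly one zero entry, those zero entries being on the diagonal ($\A_{ii}=\A_{jj}=0$). So for $A\in Q(\A)$, the condition $A\circ X=\O$ forces $X_{k\ell}=0$ whenever $A_{k\ell}\neq 0$; in particular $X_{ k\ell}=0$ for every off-diagonal position in rows $i,j$ and columns $i,j$, while $X_{ii}$ and $X_{jj}$ need not vanish (since $A_{ii}=A_{jj}=0$). The plan is to extract from the commutator equation $X^TA-AX^T=\O$ a single scalar identity linking $X_{ii}$ and $X_{jj}$.

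**Main computation.** I would look at the $(i,j)$ entry of $X^TA-AX^T=\O$. Writing this out,
\[
(X^TA)_{ij}-(AX^T)_{ij}=\sum_{k}X_{ki}A_{kj}-\sum_{k}A_{ik}X_{jk}=0 .
\]
Now I split each sum according to which of $X_{ki}$ (resp. $X_{jk}$) can be nonzero. By the previous paragraph, $X_{ki}=0$ for all $k\neq i$ (all off-diagonal entries of column $i$ are killed by the Hadamard condition, since column $i$ has its only zero on the diagonal), so the first sum collapses to the single term $X_{ii}A_{ij}$. Similarly $X_{jk}=0$ for all $k\neq j$, so the second sum collapses to $A_{ij}X_{jj}$. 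Hence the equation reads
\[
X_{ii}A_{ij}-A_{ij}X_{jj}=0,\qquad\text{i.e.}\qquad A_{ij}(X_{ii}-X_{jj})=0 .
\]
Since $i\neq j$ and row $i$ has its unique zero on the diagonal, $\A_{ij}\neq 0$, so $A_{ij}\neq 0$ for every $A\in Q(\A)$; dividing gives $X_{ii}=X_{jj}$.

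**The subtle point.** The one place to be careful is the bookkeeping of exactly which entries of $X$ are forced to zero: we only know columns $i,j$ and rows $i,j$ of $\A$ have a single zero, located on the diagonal, so the only entries of $X$ in those rows/columns that survive the Hadamard condition are the two diagonal entries $X_{ii},X_{jj}$ — and crucially we must confirm that neither surviving entry $X_{ii}$ (in column $i$) nor $X_{jj}$ (as $X_{jk}$ with $k=j$, appearing in the $(AX^T)_{ij}$ sum via $A_{ij}X_{jj}$) is accidentally annihilated by something else. Both do appear, with coefficient $A_{ij}\neq 0$, so no degeneracy occurs. If one wanted symmetry one could equally well use the $(j,i)$ entry of the commutator, which yields $A_{ji}(X_{jj}-X_{ii})=0$; either suffices. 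No cleverness beyond this is needed, so there is no real obstacle — the lemma is a short direct computation once the zero-pattern of $X$ is pinned down.
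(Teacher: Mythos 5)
Your computation is correct and is exactly the paper's argument: the paper's entire proof is the one-line identity $(AX^T-X^TA)_{ij}=A_{ij}(X_{ii}-X_{jj})$, which is precisely what you derive (up to an irrelevant overall sign) by collapsing the two sums using the Hadamard condition and then dividing by $A_{ij}\neq 0$. Your write-up just makes the bookkeeping explicit; no difference in substance.
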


\begin{proof}
This follows from the fact that $(AX^T-X^TA)_{ij}=A_{ij}(X_{ii}-X_{jj})$.
\end{proof}

\begin{theorem}\label{thm:hollow} If all the off-diagonal entries of a sign pattern $\A$ are nonzero,  then $\A$ requires the nSMP.
\end{theorem}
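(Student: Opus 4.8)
The plan is to show that the only matrix $X$ satisfying the Hadamard condition (\ref{eq:had}) and the commuting condition (\ref{eq:commute}) is $X=\O$, so that in fact $A$ will have the nSSP (and hence the nSMP) for every $A\in Q(\A)$. Let $A\in Q(\A)$ and suppose $A\circ X=\O$ and $X^TA-AX^T=\O$. Since every off-diagonal entry of $\A$ is nonzero, condition (\ref{eq:had}) immediately forces $X_{ij}=0$ for all $i\neq j$, so $X$ is a diagonal matrix, say $X=\mathrm{diag}(x_1,\ldots,x_n)$. It remains to show all the $x_i$ are equal, and then to rule out the nonzero scalar case using a further argument.

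First I would handle the case $n\ge 3$. Because all off-diagonal entries are nonzero, for any pair $i\neq j$ the hypotheses of Lemma~\ref{lem:partial} are essentially available: with $n\ge 3$, rows and columns $i$ and $j$ each have at most one zero entry (the diagonal one), so applying the computation $(AX^T-X^TA)_{ij}=A_{ij}(X_{ii}-X_{jj})$ directly — which holds whenever $X$ is diagonal, regardless of the diagonal entries of $A$ — and using $A_{ij}\neq 0$ gives $x_i=x_j$ for every pair $i\neq j$. Hence $X=cI$ for some scalar $c$. To conclude $c=0$, I would examine an off-diagonal entry of the commuting relation once more, or better, use that $X=cI$ automatically commutes with $A$ so condition (\ref{eq:commute}) gives no further information; instead the diagonal Hadamard condition is irrelevant too, so I need the trace conditions. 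But wait — for the nSSP claim we only have (\ref{eq:had}) and (\ref{eq:commute}), and $X=cI$ satisfies both. So the pattern does \emph{not} in general require the nSSP; we genuinely need the nSMP trace conditions. With $X=cI$, condition (\ref{eq:trace}) with $k=0$ reads $\mathrm{tr}(X^T)=\mathrm{tr}(cI)=nc=0$, forcing $c=0$, hence $X=\O$. This disposes of $n\ge 3$.

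For $n=2$ the pattern is $\begin{bmatrix}\ostar&\sstar\\ \sstar&\ostar\end{bmatrix}$ and the Hadamard condition forces $X=\mathrm{diag}(x_1,x_2)$; the commuting relation $(AX^T-X^TA)_{12}=A_{12}(x_1-x_2)=0$ with $A_{12}\neq 0$ gives $x_1=x_2$, and then $\mathrm{tr}(X^T)=x_1+x_2=2x_1=0$ forces $X=\O$ as well. (Indeed Example~\ref{ex:first}, which is the $n=2$ pattern with one off-diagonal entry zeroed out, shows the off-diagonal hypothesis cannot be relaxed.)

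The only real subtlety, and the step I would be most careful with, is making sure the deduction $X=cI$ then $c=0$ uses the three defining conditions honestly: the Hadamard condition kills the off-diagonal part of $X$, the commuting condition forces the surviving diagonal to be a scalar multiple of the identity, and it is the $k=0$ trace condition $\mathrm{tr}(X^TA^0)=\mathrm{tr}(X^T)=0$ that finally kills the scalar. No genericity or sign information about $A$ beyond "off-diagonal entries nonzero" is needed, so the conclusion holds for the full qualitative class $Q(\A)$ and $\A$ requires the nSMP.
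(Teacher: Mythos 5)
Your proof is correct and follows essentially the same route as the paper's: the Hadamard condition annihilates the off-diagonal entries of $X$, the commutator identity $(AX^T-X^TA)_{ij}=A_{ij}(X_{jj}-X_{ii})$ (the content of Lemma~\ref{lem:partial}) forces the diagonal entries of $X$ to agree, and the $k=0$ trace condition kills the common value. The paper merely organizes this as a case split on the number of zero diagonal entries of $\A$, whereas you handle all cases uniformly; your side observation that the trace condition is genuinely needed (such a pattern need not require the nSSP when the diagonal of $\A$ is all zero) is also correct.
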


\begin{proof}
Suppose all the off-diagonal entries of $\A$ are nonzero.
If $\A$ has no zero entries on the diagonal, then 
$A\circ X=\O$ implies $X=\O$. If 
$\A$ has  exactly one zero on the diagonal, then $A\circ X=\O$ demonstrates that all entries of $X$ are zero except possibly one diagonal entry. In this case, ${\rm{tr}}(X^T)=0$ implies that $X=\O$.
    If $\A$ has more than one zero entry on the diagonal, then Lemma~\ref{lem:partial} implies any nonzero diagonal entries of $X$ must be equal. 
    And $A\circ X=\O$ implies $X$ has no nonzero entries off the diagonal. Thus ${\rm{tr}}(X^T)=0$ implies $X=\O$.
    Therefore $\A$ requires the nSMP.
\end{proof}

\subsection{Cycle patterns}\label{sec:cycle}

In \cite{CGSV} it was shown that if the digraph of an $n\times n$ sign pattern $\A$ is a directed $n$-cycle with no loops, 
then $\A$ requires the nSMP. Below we extend this result, by showing that a cycle can have any number of loops
and the pattern will still require the nSMP.

\begin{theorem}\label{thm:cycle}
If the digraph of an $n\times n$ sign pattern $\A$ is an $n$-cycle, with any number of vertices having a loop, then $\A$ requires the nSMP.
\end{theorem}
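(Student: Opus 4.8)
The plan is to argue directly from the three nSMP conditions, exploiting the very rigid structure of a matrix whose digraph is an $n$-cycle with loops. Up to the equivalences of Lemma~\ref{lem:equiv} (permutation similarity) we may assume the arcs of the cycle are $(v_1,v_2),(v_2,v_3),\dots,(v_n,v_1)$, so $A$ has nonzero entries exactly in positions $(1,2),(2,3),\dots,(n-1,n),(n,1)$ together with some subset of the diagonal positions. Fix $A\in Q(\A)$ and let $X$ be a solution of \eqref{eq:had}, \eqref{eq:commute}, \eqref{eq:trace}. First I would use the Hadamard condition \eqref{eq:had}: it forces $X_{i,i+1}=0$ for all $i$ (indices mod $n$), and it forces $X_{ii}=0$ at every vertex carrying a loop. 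So the only entries of $X$ that can survive are the diagonal entries at loopless vertices and the off-cyclic positions, i.e. entries $X_{ij}$ with $j\neq i$ and $(i,j)$ not a cycle arc.

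Next I would extract information from the commutator condition \eqref{eq:commute}. The key observation is that $AX^T-X^TA$, when read off entrywise, couples the entries of $X$ along the cycle: examining the $(i,i+1)$ entry of $AX^T-X^TA$ gives, as in Lemma~\ref{lem:partial}, a relation of the form $A_{i,i+1}(X_{ii}-X_{i+1,i+1})$ plus contributions from the off-cyclic entries of $X$ in row $i$ and column $i+1$; more generally the $(i,j)$ entry couples $X_{\cdot,\cdot}$ entries that sit a fixed cyclic "distance'' apart. My plan is to process these relations in order of increasing cyclic distance $d = (j-i)\bmod n$: the entries of $X$ at cyclic distance $d$ satisfy a homogeneous linear recurrence around the cycle (each such entry is a nonzero multiple of the "next'' one, since the relevant $A$-entries are nonzero), so for each $d$ either all of them vanish or none do. In particular, for $d=0$, either $X$ has a zero diagonal or all the surviving diagonal entries are forced to be equal to a common value $c\neq 0$; and for each $d\neq 0, 1$ the off-cyclic entries of $X$ on that "diagonal band'' are either all zero or all nonzero and proportional around the cycle.

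It then remains to kill the potentially nonzero bands using the trace conditions \eqref{eq:trace}. For the diagonal band ($d=0$): if all surviving diagonal entries equal $c$, then $X = c\,P$ where $P$ is the diagonal $0/1$ matrix supported on the loopless vertices, and $\mathrm{tr}(X^T A^0)=\mathrm{tr}(X)=0$ forces $c=0$ (there is at least one loopless vertex, else $A\circ X=\O$ already gives $X=\O$). For a band at cyclic distance $d$ with $0<d<n$: I would compute $\mathrm{tr}(X^T A^k)$ for a suitable $k$ — the natural choice is $k=n-d$, since $A^{n-d}$ has its "long'' contribution exactly on the cyclic band at distance $d$ (a product of distinct cycle-arc entries around the cycle, which is nonzero), so $\mathrm{tr}(X^T A^{n-d})$ is a sum of products of nonzero reals times the band entries of $X$; one has to check these products all share the same sign (this is where the proportionality relations derived from \eqref{eq:commute} pay off, forcing the summands not to cancel), whence the band entries vanish. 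Running $d$ from $0$ up to $n-1$ exhausts all the surviving entries of $X$, so $X=\O$ and $\A$ requires the nSMP.

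The main obstacle I anticipate is the bookkeeping in the last step: verifying that $\mathrm{tr}(X^T A^{n-d})$ is a sum of terms that cannot cancel. This requires combining the sign of the cycle, the sign of the relevant diagonal entries of $A$, and the proportionality constants linking consecutive band entries of $X$ that came out of \eqref{eq:commute}, and showing these always conspire to give a single-signed sum; the interaction of loops with the cyclic structure (so that $A^{n-d}$ may also contribute shorter walks that reuse loops) is the delicate point and may force a more careful induction on $n$ or a reduction to the loopless case of \cite{CGSV} by a perturbation/limiting argument. A cleaner alternative, which I would try first, is to induct on $n$: deleting a loopless vertex and splicing gives a smaller cycle pattern, and one shows the nSMP-solution space can only shrink — but one must handle the case where $A$ has loops at every vertex but one, where \eqref{eq:had} already does most of the work.
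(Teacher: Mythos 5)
Your overall strategy --- peel the entries of $X$ off band by band around the cycle, using the commutator to make each band internally proportional and a trace condition to kill it --- is the same strategy the paper uses, but as written your outline has two genuine gaps. The first is the processing order. You propose to handle the diagonal band $d=0$ first, claiming that condition~\eqref{eq:commute} forces all surviving diagonal entries of $X$ to be equal. That is not yet available at that stage: the only commutator entries that involve the diagonal of $X$ nontrivially are the $(i,i+1)$ entries, and
\[
(AX^T-X^TA)_{i,i+1}=A_{i,i+1}\bigl(X_{i+1,i+1}-X_{ii}\bigr)+(A_{ii}-A_{i+1,i+1})X_{i+1,i},
\]
so whenever exactly one of $v_i,v_{i+1}$ carries a loop, the diagonal entries are entangled with the entries $X_{i+1,i}$ (cyclic distance $n-1$), which you have not yet eliminated. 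Each off-diagonal commutator entry couples two \emph{adjacent} bands of $X$, so the bands must be cleared in the order $1$ (free, from~\eqref{eq:had}), then $2,3,\dots,n-1$, and the diagonal \emph{last}, not first. (Once the other bands are gone, the displayed identity does give $X_{ii}=X_{i+1,i+1}$ for all $i$, and then $\mathrm{tr}(X)=0$ together with $X_{ii}A_{ii}=0$ finishes the diagonal.)

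The second and more serious gap is the step you yourself flag as the ``main obstacle'': showing that the relevant trace is a sum of terms that cannot cancel. This is the actual content of the theorem, and neither of your fallbacks closes it --- a perturbation/limiting reduction to the loopless case of \cite{CGSV} goes the wrong way (the nSMP is the statement that an $A$-dependent linear system has only the trivial solution, a condition not preserved under limits), and deleting a vertex destroys the hypothesis that $\mathrm{tr}(X^TA^k)=0$ for all $0\le k<n$. The paper's resolution is a normalization you skipped: by diagonal similarity (Lemma~\ref{lem:equiv}) one may take every cycle arc equal to $1$ except a single arc equal to $d\neq 0$. Then the recurrence you extract from~\eqref{eq:commute} forces each band to take only the two values $u$ and $du$, distributed so that every one of the $n$ summands of the relevant trace equals exactly $du$; loop contributions to that trace vanish because they pair either with entries of $X$ in already-cleared bands or with $X_{ii}A_{ii}=0$. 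The trace is therefore $ndu$, so $u=0$ with no sign bookkeeping at all. Equivalently, the commutator makes $X_{pq}$ inversely proportional to the product of the arc weights along the cycle path from $p$ to $q$, so the summands are literally identical rather than merely same-signed. Without this observation (or the diagonal-similarity normalization that makes it transparent), your outline does not yet constitute a proof.
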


\begin{proof} Suppose $D(\A)$ is an $n$-cycle.
Suppose $A\in Q(\A)$ and $A\circ X=\O$, $AX^T=X^TA$, and ${\rm{tr}}(X^TA^k)=0$ for $0\leq k<n$. Note that ${\rm{tr}}(X^TA^n)=0$ also holds since $A^n$ is a polynomial in $A$ of degree $n-1$ by the Cayley-Hamilton Theorem. 
By permutation and diagonal similarity,
we may assume that 
\[ A=\left[ \begin{array}{ccccc}
\ostar & 1 &0 &\cdots &0\\
0&\ddots&\ddots&\ddots&\vdots\\
\vdots&\ddots&\ddots&\ddots&0\\
0& \cdots &0&\ostar&1\\
d& 0&\cdots&0&\ostar\\
\end{array}\right]
\]
for some $d\in \R$, with the diagonal entries undetermined,
as denoted by $\ostar$. In our multiplications, we will use
the fact that $X_{ii}A_{ii}=0$ for $1\leq i\leq n$, which follows from $A\circ X=\O$. 
Let 
\[ P=\left[\begin{array}{ccccc}
0&\cdots&\cdots &0&1\\
1&0&\cdots&0&0\\
0&\ddots&\ddots&\vdots&\vdots\\
\vdots&\vdots&\ddots&0&0\\
0&\cdots&0&1&0\\
\end{array}\right].
\]
We will show by induction that $P^k\circ X^T=\O$ for $1\leq k\leq n.$
Note that $P\circ X^T=\O$ since $A\circ X=\O$. 
Letting $\mathbf{1}$ be the $n\times 1$ all-ones vector, 
note that $AX^T=X^TA$ implies $[P\circ (AX^T)]\one=[P\circ (X^TA)]\one$ and hence
\[ \left[X^T_{2,n},X^T_{3,1},X^T_{4,2},\ldots,X^T_{n,n-2}, dX^T_{1,n-1} \right] =\left[ X^T_{1,n-1},dX^T_{2,n}, X^T_{3,1},\ldots, X^T_{n-1,n-3},X^T_{n,n-2} \right]. \]
Therefore $X^T_{1,n-1}=X^T_{2,n}=u_1$ for some $u_1\in \R$ and $X^T_{3,1}=X^T_{4,2}=\cdots =X^T_{n-1,n-3}=X^T_{n,n-2}=du_1$. 
Now, \[{\rm{tr}}(X^TA^2)=dX^T_{1,n-1}+dX^T_{2,n}+ X^T_{3,1}+\cdots+X^T_{n,n-2}=ndu_1.\] 
Thus ${\rm{tr}}(X^TA^2)=0$ implies $u_1=0$
and thus $X^T_{1,n-1}=X^T_{2,n} = \cdots = X^T_{n,n-2}=0$.
Consequently $P^2\circ X^T=\O$.
Inductively, using the fact that  $AX^T=X^TA$ implies $[P^k\circ (AX^T)]\one=[P^k\circ (X^TA)]\one$, it follows that
\[ \left[X^T_{2,n-k+1},X^T_{3,n-k+2},\ldots, X^T_{n,n-k-1}, dX^T_{1,n-k}\right] 
= 
\]
\[ 
\left[ X^T_{1,n-k},X^T_{2,n-k+1},\ldots,X^T_{k,n-1},dX^T_{k+1,n},X^T_{k+2,1},\ldots, X^T_{n,n-k-1} \right]. \]
 Thus $X^T_{1,n-k}=X^T_{2,n-k+1}=\cdots =X^T_{k+1,n}=u_k$ for some $u_k\in \R,$
 and $X^T_{k+2,1}=X^T_{k+3,2}=\cdots=X^T_{n,n-k-1}=du_k$. Now, 
\[0={\rm{tr}}(X^TA^{k+1})=d\sum_{i=0}^k X^T_{1+i,n-k+i} + \sum_{i=1}^{n-k-1} X^T_{k+1+i,i}=ndu_k.\] Thus $u_k=0$ and $P^{k+1}\circ X^T=\O$ for $0\leq k\leq n-1$. 
Therefore $X=\O$ and so $A$ has the nSMP. Therefore $\A$ requires the nSMP.
\end{proof}

\subsection{Star patterns}\label{sec:star}
In this subsection, we characterize when star sign patterns require, allow, or do not allow the nSMP.
Let \[ \A=
\left[
\begin{array}{ccccc}
    \ostar &*&\cdots&\cdots&* \\
    \sstar & \ostar &0 &\cdots&0\\  
    \vdots &0 & \ddots&\ddots &\vdots\\
    \sstar &\vdots &\ddots&\ostar&0\\
    \sstar &0&\cdots&0&\ostar\\
\end{array}
\right]
\]
be a \emph{nonzero star pattern}. 
A \emph{star sign pattern} has the structure of the nonzero star 
pattern with each $*$ replaced by an element in $\{+,-\}$
and each $\ostar$ replaced by an element of $\{ 0,+,-\}$. The \emph{centre} of the star is vertex $v_1$ in $D(\A)$.

\begin{example}\label{ex:starloops}
Suppose $\A$ is a star sign pattern having two non-centre vertices without loops and suppose
the $2$-cycles incident to the loops have different signs. By permutation and signature
similarity we may assume $\A_{n-1,n-1}=\A_{n,n}=0$, $\A_{1,n-1}=\A_{1,n}=\A_{n,1}=+$ and $\A_{n-1,1}=-$. 
Let $A\in Q(\A)$ with 
$A_{1,n-1}=A_{1,n}=A_{n-1,1}=1$ 
and $A_{n,1}=-1$.
Let $S=\{n-1,n\}$ and  $X$ be an $n\times n$ matrix with principal submatrix 
\[ 
X[S]=\left[
\begin{array}{rr}
-1&1\\-1&1
\end{array}\right]
\]
with all other entries
of $X$ zero. Then $A\circ X=\O$, $X^TA=\O$ and $AX^T=\O$ so $X^TA-AX^T=\O$. Further ${\rm{tr}}(X)=0$
and $X^TA=\O$ implies ${\rm{tr}}(X^TA^k)=0$ for $1\leq k\leq n-1$. Therefore $A$ does not
have the nSMP and so $\A$ does not require the nSMP.
\end{example}

\begin{example}\label{ex:starthree}
Suppose $\A$ is a star sign pattern having three non-centre vertices having a loop of the same sign.
By permutation similarity, we may assume that vertices $n-2,n-1$ and $n$ all have a loop
of the same sign in $D(\A)$. By negation, we can assume the signs of these loops are positive. By signature similarity we may assume
that $\A_{1,n-2}=\A_{1,n-1}=\A_{1,n}=+$.
Let $A\in Q(\A)$ be a matrix  such that 
$A_{1,n-2}=A_{1,n-1}=A_{1,n}=1$, and $A_{n-2,n-2}=A_{n-1,n-1}=A_{n,n}=1$ with
$A_{n-2,1}=a$, $A_{n-1,1}=b$ and $A_{n,1}=c$ for some nonzero $a,b,c\in \R$.  
Taking $S=\{n-2,n-1,n\}$, let $X$ by an $n\times n$ matrix with principal submatrix 
\[
X[S]= \left[ 
\begin{array}{rrr}
0&cb&-cb\\
-ca&0&ca\\
ab&-ab&0\\
\end{array}
\right]
\]
with the remaining entries of $X$ zero. 
Then $A\circ X=\O$, and 
$X^TA=X^T=AX^T$. Since ${\rm{tr}}(X^T)=0$, and 
$X^TA=X^T$, it follows that ${\rm{tr}}(X^TA^k)=0$ for
$0\leq k<n.$
So $A$ does not have the nSMP
and hence $\A$ does not require
the nSMP. If we replace the condition that  three non-centre vertices have the same sign,
with the condition that there are three non-centre vertices with no loops,  then the same
argument above still applies (noting that
$X^TA=\O=AX^T$). Therefore $\A$ also does not require the nSMP if $\A$ is a star sign pattern with three loopless non-centre vertices.
\end{example}

\begin{theorem}\label{thm:star}
Suppose $\A$ is an $n\times n$  star sign pattern for some $n\geq 2$. Then $\A$ requires the nSMP if and only if 
 $D(\A)$ does not have more than two non-centre vertices with loops of the same sign (including possibly a ``sign'' of zero) 
and in addition, if $D(\A)$ has two non-centre vertices with no loop, then the incident $2$-cycles have the same sign.
\end{theorem}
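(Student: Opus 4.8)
The plan is to prove the two directions separately, using the already-established examples for the "only if" direction and a direct computation with the nSMP equations for the "if" direction. For the \emph{only if} direction, suppose $D(\A)$ violates one of the two stated conditions. If $D(\A)$ has three or more non-centre vertices carrying loops of the same sign (where a ``sign'' of zero means no loop), then either Example~\ref{ex:starthree} (three loops of the same nonzero sign, or three loopless non-centre vertices) applies directly, or one reduces to it: pick any three such vertices and build the $3\times 3$ block $X[S]$ there, zero elsewhere, which satisfies all three nSMP conditions nontrivially. If instead $D(\A)$ has exactly two loopless non-centre vertices whose incident $2$-cycles have opposite signs, then Example~\ref{ex:starloops} produces a nontrivial $X$. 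Either way $\A$ fails to require the nSMP, which is the contrapositive of what we want.

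For the \emph{if} direction, assume $D(\A)$ satisfies both conditions, fix $A\in Q(\A)$, and suppose $A\circ X=\O$, $AX^T-X^TA=\O$, and $\mathrm{tr}(X^TA^k)=0$ for $0\le k<n$. First use $A\circ X=\O$ to kill most entries of $X$: $X_{ij}=0$ whenever $\A_{ij}\ne 0$, so $X$ is supported on the positions where $\A$ is zero — i.e.\ the ``missing'' entries of the star: the diagonal entries at loopless vertices, and the off-diagonal block among the non-centre vertices $v_2,\dots,v_n$ (call the non-centre index set $S$). Next exploit the commutator equation. Writing out $(AX^T-X^TA)_{ij}=\O$ for $i,j\in S$, $i\ne j$, and for $i=1$ or $j=1$, and for $i=j$, should force strong relations among the surviving entries of $X$: the off-diagonal entries $X_{ij}$ with $i,j\in S$ get tied to the products $A_{1j}A_{i1}$ and to differences of diagonal entries $X_{ii}-X_{jj}$, in the spirit of Lemma~\ref{lem:partial}, while the $1$-row and $1$-column equations (which involve $\sum_{k\in S}$ of things) constrain how these can combine. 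Finally the trace conditions $\mathrm{tr}(X^TA^k)=0$ — note $X^TA$ has a very simple form since $X$ is so sparse and $A$ is a star — should be enough to conclude all surviving entries vanish; as in the proof of Theorem~\ref{thm:cycle}, successive powers of $A$ feed successive linear constraints on the unknowns, and the hypothesis on loop-signs is exactly what makes the resulting linear system nonsingular (this is where Examples~\ref{ex:starloops} and~\ref{ex:starthree} show the hypothesis cannot be dropped).

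Concretely, I would organize the "if" direction by cases on how many non-centre vertices are loopless. After the Hadamard reduction, $X$ has at most: one free off-diagonal parameter per non-centre vertex pair (but the commutator forces $X_{ij}$ and $X_{ji}$ to be proportional to $A_{i1}A_{1j}$-type quantities and essentially reduces the off-diagonal part to a small number of parameters), plus the diagonal entries $X_{ii}$ at loopless $i$ (loopy vertices have $X_{ii}=0$ by Hadamard). The condition "at most two loopless non-centre vertices, and if exactly two then equal $2$-cycle signs" guarantees that the diagonal-parameter system closed up by $AX^T=X^TA$ is rigid, and the "at most two loops of each sign" condition does the same for the off-diagonal parameters; the trace equations then finish it. I would present this as: (1) Hadamard reduction; (2) solve the commutator equations to express all of $X$ in terms of a handful of scalars; (3) plug into $\mathrm{tr}(X^TA^k)=0$ and show the coefficient matrix is invertible exactly under the hypotheses.

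The main obstacle I expect is step (2)–(3) of the "if" direction: bookkeeping the commutator equations carefully enough to see precisely which scalar parameters survive, and then checking that the linear system coming from the trace conditions is nonsingular under the stated loop-sign hypotheses — and, dually, confirming it becomes singular (consistent with the counterexamples) when the hypotheses fail. Getting the case division clean — loopy vs.\ loopless non-centre vertices, and the parity/sign of the $2$-cycles — is where the real work lies; the Hadamard reduction and the general shape of the argument mirror Theorem~\ref{thm:cycle} and should be routine.
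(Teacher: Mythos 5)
Your \emph{only if} direction matches the paper: both necessity conditions are discharged by Examples~\ref{ex:starloops} and~\ref{ex:starthree} (and padding $X$ with zeros outside a chosen triple or pair of non-centre vertices works exactly as you say). The problem is the \emph{if} direction, which is where essentially all of the work in this theorem lives, and which your proposal outlines but does not carry out — as you yourself concede (``should force strong relations,'' ``should be enough to conclude,'' ``the main obstacle I expect is step (2)--(3)''). The claim that the trace conditions yield a nonsingular linear system ``exactly under the hypotheses'' is precisely the content of the theorem and cannot be asserted on the strength of an analogy with Theorem~\ref{thm:cycle}; in the cycle case the trace equations decouple one parameter at a time ($\mathrm{tr}(X^TA^{k+1})=ndu_k$), whereas here they do not.

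Concretely, three ingredients are missing. First, the structural identity that after normalizing $A_{1j}=1$ one has $(X^TA-AX^T)_{ij}=(X^T)_{ij}(A_{jj}-A_{ii})$ for non-centre $i\neq j$ (your description of these entries as ``tied to products $A_{1j}A_{i1}$'' is not what the computation gives); this forces $X_{ij}=0$ unless $A_{ii}=A_{jj}$ and shows each nonzero row/column of the non-centre block of $X^T$ is a multiple of a single standard basis vector. Second, the reduction this buys: any sign value occurring exactly once on a non-centre diagonal contributes nothing to $X$, so under the hypothesis one is left with each of the at most three classes ($0$, $+$, $-$) occurring exactly twice, i.e.\ with explicit parametrized matrices of order $3$, $5$, or $7$ (plus the subcase where every non-centre vertex has a loop, and the subcase of two loopless vertices where the equal-sign hypothesis on the $2$-cycles enters via $(X^TA-AX^T)_{21}=(A_{21}+A_{31})X^T_{22}$). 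Third, the actual contradictions in those cases, which are not a generic nonsingularity check: in the order-$7$ case one must first extract $c=d$, $g=h$, $e=f$, $k=\ell=1$, $f=cg$ and $g=1$ from specific commutator entries and from $\mathrm{tr}(X^TA^3)$ and $\mathrm{tr}(X^TA^5)$, then solve $\mathrm{tr}(X^TA^2)=0$ for $c$, and only then does $\mathrm{tr}(X^TA^4)=-4x(a+b)$ contradict the same-sign hypothesis on $a,b$. Until this case analysis is executed, the sufficiency direction is a plausible plan rather than a proof.
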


\begin{proof}
Examples~\ref{ex:starloops} and \ref{ex:starthree} demonstrate the two given conditions are necessary.
Suppose $\A$ is a star sign pattern such that $D(\A)$ has no more than two non-centre
vertices having loops with the same sign (including the ``sign'' of zero). 
Note that if $n=2$, then $\A$ requires the nSMP by Theorem~\ref{thm:hollow}. Thus assume $n\geq 3$.
Let $A\in Q(\A)$. By Lemma~\ref{lem:equiv}, using diagonal similarity, we may assume that
$A_{1,j}=1$ for $2\leq j\leq n$. Suppose $A\circ X=\O$, $X^TA-AX^T=\O$ and ${\rm{tr}}(X^TA^k)=0$ for 
$0\leq k<n$. We will demonstrate that this implies $X=0$ and hence $\A$ requires the nSMP. 

The first row and column of $X$ will be zero except possibly $X_{11}$
since $A\circ X=\O$. Note that for $i,j \in \{2,\ldots,n\}$, $(X^TA-AX^T)_{ij}=(X^T)_{ij}(A_{jj}-A_{ii})$. It follows
that for any $i,j \in \{2,\ldots,n\}$ with $A_{ii}\neq A_{jj}$, then $X_{ij}=X_{ji}=0$.    
Suppose $A_{ii}=A_{jj}\neq 0$.  Then column $i$ of $X^T$ is $X_{ji}^Te_j$ 
where $e_j$ is the standard
basis vector with a 1 in the $j$th position (and column $j$ is $X_{ij}^Te_i$). Likewise
row $i$ of $X^T$ is $X_{ij}^Te_j^T$ and row $j$ is $X_{ji}^Te_i^T$. We take these observations into
account in the following analysis.

Suppose that every non-centre vertex has a loop. Since $A\circ X=\O$, $X$ has at most one
nonzero diagonal entry $X_{11}$. However, the trace condition will force $X_{11}=0$ in this
case. Then for any $2\leq j\leq n$, 
$(X^TA-AX^T)_{1j}=-(AX^T)_{1j}$ which forces $X=\O$ by the above observations.   

Suppose that for some diagonal entry corresponding to a non-centre vertex, there is no other
diagonal entry corresponding to a non-center vertex with the same sign in $\{0,+,-\}$. Without loss
of generality, this entry is $A_{nn}$. Then row and column $n$ of $X$ is all zero by the previous
paragraph. Further, $(X^TA-AX^T)_{1n}=X_{11}$, hence $X_{11}=0$.  Suppose $A_{ii}=A_{jj}\neq 0$ for some $i,j$. Note that
$(X^TA-AX^T)_{1i}=-X^T_{ji}$ and $(X^TA-AX^T)_{1j}=-X^T_{ij}$. Thus rows (and columns) $i$ and $j$ of $X^T$
are all zero. Suppose there exists two non-centre vertices with no loops. Without loss of generality,
assume $A_{22}=0$ and $A_{33}=0$. Note that $X^T_{33}=-X^T_{22}$ since ${\rm{trace}}(X)=0$. 
Further $(X^TA-AX^T)_{12}=-X^T_{32}-X^T_{22}$ and $(X^TA-AX^T)_{13}=-X^T_{23}-X^T_{33}$. Therefore
$X^T_{23}=X^T_{22}$ and $X^T_{32}=-X^T_{22}$. Now $(X^TA-AX^T)_{21}=(A_{21}+A_{31})X^T_{22}$. However, 
$A_{21}$ has the same sign as $A_{31}$ by the hypothesis that the $2$-cycles incident to non-centre vertices have the same sign.  
Thus $X^T_{22}=0$ and therefore $X=\O$. 

Suppose now that any entry in $\{0,+,-\}$ that appears on the diagonal at a non-centre vertex of $D(\A)$ 
appears exactly twice. As a result, we need only consider the following  three cases that do not have loops on all non-centre vertices.

Case (1). Suppose 
\[
A=\left[ \begin{array}{ccc}
m&1&1\\
a&0&0\\ b&0 &0
\end{array}\right]
\qquad {\rm{ with}} \qquad
X^T=\left[ \begin{array}{ccc}
x&0&0\\
0&r&s\\ 0&t&u
\end{array}\right]
\]
for some nonzero $a,b$ with the same sign. Suppose $m\neq 0$. Then $x=0$. 
Further, $(X^TA-AX^T)_{12}=-r-t$, so that $t=-r$. Similarly $u=-s$.
Since ${\rm{trace}}(X)=0$, $s=r$. Now $(X^TA-AX^T)_{21}=(a+b)r$. Since
$a$ and $b$ have the same sign, $r=0$. Therefore $X=\O$. Suppose instead
that $m=0$. Then $(X^TA-AX^T)_{12}=x-r-t$ and $(X^TA-AX^T)_{13}=x-s-u$
so that $t=x-r$ and $u=x-s$. Then ${\rm{trace}}(X^TA^2)=2(a+b)x$. Since
$a,b$ have the same sign, $x=0$. Since ${\rm{trace}}(X^T)=0$, 
$s=r$. In this case $(X^TA-AX^T)_{21}=(a+b)r$. Thus $r=0$, since $a,b$ have the
same sign, and therefore $X=\O$.

Case (2). Suppose 
\[
A=\left[ \begin{array}{ccccc}

m&1&1&1&1\\
a&0&0&0&0\\ b&0&0&0&0\\ 
c&0&0&g&0\\ d&0&0&0&h\\
\end{array}\right]
\qquad {\rm{ with}} \qquad
X^T=\left[ \begin{array}{ccccc}
x&0&0&0&0\\
0&r&s&0&0\\
0&t&u&0&0\\
0&0&0&0&v\\ 
0&0&0&w&0
\end{array}\right]
\]
for some nonzero $a,b$ with the same sign and some nonzero $g,h$ with the same sign.

Suppose $m\neq 0$. Then $x=0$ since $A\circ X=\O$. Now $(X^TA-AX^T)_{12}=-r-t$ and $(X^TA-AX^T)_{13}=-s-u$.
Thus $t=-r$ and $u=-s$. Since ${\rm{trace}}(X)=0$, $s=r$. Now $(X^TA-AX^T)_{21}=(a+b)r$. Since
$a$ and $b$ have the same sign, $r=0$. Now $(X^TA-AX^T)_{14}=-w$ and $(X^TA-AX^T)_{15}=-v$, and 
thus $w=v=0$. Therefore $X=0$.

Suppose $m=0$. Then $(X^TA-AX^T)_{12}=x-r-t$ and $(X^TA-AX^T)_{13}=x-s-u$
so that $t=x-r$ and $u=x-s$. Now $(X^TA-AX^T)_{14}=x-w$ and $(X^TA-AX^T)_{15}=x-v$,
thus $v=w=x$. If $x=0$, then $X=\O$ as per
the argument in the previous paragraph when $m\neq 0$.   
Thus suppose $x\neq 0$.
Since $(X^TA-AX^T)_{51}=(c-d)x$ 
and $(X^TA-AX^T)_{54}=(g-h)x$, $c=d$ and $g=h$.
But then $0={\rm{trace}}(X^TA^3)=6cgx$ contradicting the fact that $c,g$ and $x$ are nonzero.
Therefore $X=\O$.

Case (3). Suppose 
\[
A=\left[ \begin{array}{rrrrrrr}
m&1&1&1&1&1&1\\
a&0&0&0&0&0&0\\ b&0&0&0&0&0&0\\ 
c&0&0&g&0&0&0\\ d&0&0&0&h&0&0\\ e&0&0&0&0&-k&0\\ f&0&0&0&0&0&-\ell \end{array}\right]
\qquad {\rm{ with}} \qquad
X^T=\left[ \begin{array}{ccccccc}
x&0&0&0&0&0&0\\ 0&r&s&0&0&0&0\\
0&t&u&0&0&0&0\\ 0&0&0&0&v&0&0\\ 
0&0&0&w&0&0&0\\ 0&0&0&0&0&0&y\\ 0&0&0&0&0&z&0
\end{array}\right]
\]
for some nonzero $a,b$ with the same sign, and some positive $g,h,k$ and $\ell$. By Lemma~\ref{lem:equiv}, using scaling (and diagonal similarity) we 
can assume $\ell=1$.

Suppose $m\neq 0$. Then $x=0$ since $A\circ X=\O$. Now $(X^TA-AX^T)_{12}=-r-t$ and $(X^TA-AX^T)_{13}=-s-u$, and 
thus $t=-r$ and $u=-s$. Since ${\rm{trace}}(X)=0$, $s=r$.
Now $(X^TA-AX^T)_{21}=(a+b)r$. Since
$a$ and $b$ have the same sign, $r=0$.
Now $(X^TA-AX^T)_{14}=-w$ and $(X^TA-AX^T)_{15}=-v$, 
and $(X^TA-AX^T)_{16}=-z$ and $(X^TA-AX^T)_{17}=-y$.
Thus $w=v=z=y=0$. Therefore $X=0$.

Suppose $m=0$.  Then
$(X^TA-AX^T)_{12}=x-r-t$ and $(X^TA-AX^T)_{13}=x-s-u$,
so that $t=x-r$ and $u=x-s.$ 
Now $(X^TA-AX^T)_{14}=x-w$, $(X^TA-AX^T)_{15}=x-v$,
  $(X^TA-AX^T)_{16}=x-z$ and $(X^TA-AX^T)_{17}=x-y$,
thus $v=w=z=y=x$. If $x=0$, then $X=\O$, as per the argument in the previous paragraph when $m\neq 0$.
Suppose $x\neq 0$. We claim that this leads to a contradiction.
Since $(X^TA-AX^T)_{51}=(c-d)x$ and $(X^TA-AX^T)_{54}=(g-h)x$, 
 then $d=c$ and $h=g$.
Since $(X^TA-AX^T)_{71}=(e-f)x$ and $(X^TA-AX^T)_{76}=(1-k)x$, $e=f$ and $k=1$.
Now $s=r+2x$ since ${\rm{trace}}(X^T)=r-s+2x$. 
At this point,
\[
A=\left[ \begin{array}{rrrrrrr}
0&1&1&1&1&1&1\\
a&0&0&0&0&0&0\\ b&0&0&0&0&0&0\\ 
c&0&0&g&0&0&0\\ c&0&0&0&g&0&0\\ f&0&0&0&0&-1&0\\ f&0&0&0&0&0&-1 \end{array}\right]
\qquad {\rm{ with}} \qquad
X^T=\left[ \begin{array}{ccccccc}
x&0&0&0&0&0&0\\ 0&r&r+2x&0&0&0&0\\
0&x-r&-x-r&0&0&0&0\\ 0&0&0&0&x&0&0\\ 
0&0&0&x&0&0&0\\ 0&0&0&0&0&0&x\\ 0&0&0&0&0&x&0
\end{array}\right].
\]
 Then $f=cg$ since ${\rm{trace}}(X^TA^3)=6x(cg-f)$ and $x\neq 0$.
Then ${\rm{trace}}(X^TA^5)=10cgx(g^2-1)$. Since $c,g$ and $x$ are nonzero,
and $g$ is positive, $g=1$. Now ${\rm{trace}}(X^TA^2)=2x(a+b) + 8cx$. Thus
$c=\frac{a+b}{4}$. Then ${\rm{trace}}(X^TA^4)=-4x(a+b)$. 
However, $a,b$ have the
same sign and $x$ is nonzero, which is a contradiction.

Therefore $\A$ requires the nSMP. 
\end{proof}

\begin{corollary}
If $\A$ is an $n\times n$ star sign pattern and $n>7$, then $\A$ does not require the nSMP.
\end{corollary}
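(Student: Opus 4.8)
The plan is to deduce the statement directly from the star characterization in Theorem~\ref{thm:star} by an elementary pigeonhole count. First I would recall that, by definition, an $n\times n$ star sign pattern has a single centre vertex $v_1$ in $D(\A)$ together with exactly $n-1$ non-centre vertices $v_2,\dots,v_n$, and each non-centre vertex $v_j$ carries a loop whose ``sign'' is the value $\A_{jj}\in\{0,+,-\}$, where the value $0$ is interpreted as ``no loop''. Thus the $n-1$ non-centre vertices are distributed among at most three classes according to this sign.

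Next I would apply the pigeonhole principle. Since $n>7$ we have $n-1\ge 7$, and $7>2\cdot 3$, so at least one of the three sign classes must contain at least $\lceil 7/3\rceil=3$ of the non-centre vertices. Hence $D(\A)$ has more than two non-centre vertices with loops of the same sign (this includes the possibility that three or more non-centre vertices have no loop). This is precisely the configuration excluded by the first condition in the characterization of Theorem~\ref{thm:star} --- the situation treated by Example~\ref{ex:starthree} --- so $\A$ fails the necessary and sufficient criterion for requiring the nSMP, and therefore $\A$ does not require the nSMP. I do not expect any real obstacle here: the corollary is an immediate consequence of Theorem~\ref{thm:star}, and the only point needing a moment's care is the elementary counting ($\lceil 7/3\rceil=3$) and checking that it matches the ``including possibly a sign of zero'' clause of that theorem.
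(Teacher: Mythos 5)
Your proof is correct and is exactly the intended argument: the paper states this corollary without proof as an immediate pigeonhole consequence of Theorem~\ref{thm:star}, since $n>7$ forces $n-1\geq 7$ non-centre vertices into the three loop-sign classes $\{0,+,-\}$, so some class contains at least three of them, violating the first condition of the characterization.
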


In Section~\ref{sec:hessy}, 
the star sign patterns that do not allow the nSMP will be characterized (Corollary~\ref{cor:starallow}).

\begin{remark}
    If a zero-nonzero matrix pattern requires the nSMP, then any sign pattern with the same zero-nonzero pattern will also require the nSMP. However, Theorem~\ref{thm:star} demonstrates that there is a sign pattern that requires the nSMP, but the corresponding zero-nonzero pattern does not require the nSMP  since the characterization depends on the signs.
\end{remark}

\subsection{Hessenberg patterns}\label{sec:hessenberg}

In this subsection we characterize a large class of Hessenberg patterns that require the nSMP.
A proper (lower) \emph{Hessenberg} pattern is an $n\times n$ matrix pattern $\A$ such that for $1\leq i < n$, 
$\A_{i,i+1}\neq 0$, and $\A_{i,j}=0$ for all $j\geq i+2$.  
For $k\in\{1,2,\ldots, n-1\}$, the \emph{$k$th subdiagonal} of an $n \times n$ matrix is the set of positions $\{ (i, i-k) : k+1 \leq i \leq n\}$. Note that if the digraph of a (lower) Hessenberg pattern $\A$ has a $k$-cycle, then the matrix $A\in Q(\A)$ has a nonzero entry on the 
$(k-1)$th subdiagonal of $A$. 

\begin{example} The following two Hessenberg patterns have exactly one nonzero entry on each subdiagonal. Theorem~\ref{thm:subdiag} will demonstrate that these patterns require the nSMP. In fact, any superpattern of these patterns, which preserves the Hessenberg structure, will also require the nSMP by Theorem~\ref{thm:subdiag}.
\[
\left[ 
\begin{array}{cccccc}
0&+&0&0&0&0\\
+&0&+&0&0&0\\
+&0&0&+&0&0\\
+&0&0&0&+&0\\
+&0&0&0&0&+\\
+&0&0&0&0&0\\
\end{array}
\right] \qquad 
\left[ 
\begin{array}{cccccc}
0&+&0&0&0&0\\
0&0&+&0&0&0\\
0&-&0&+&0&0\\
+&0&0&0&+&0\\
0&0&-&0&0&+\\
+&-&0&0&0&0\\
\end{array}
\right]
\]
\end{example}

Proper Hessenberg matrices are known to be
nonderogatory (see, e.g. \cite{HJ}): a matrix is \emph{nonderogatory} if the geometric multiplicity of each of its eigenvalues is one.  The following theorem from \cite[Theorem 3.2.4.2]{HJ} provides a useful
tool for demonstrating that some nonderogatory matrices have the nSMP. 

\begin{theorem}\cite{HJ}\label{rem:Hessy} If a matrix $X$ commutes with a nonderogatory matrix $A$
then 
\[X=c_0I+c_1A+c_2A^2+\cdots +c_{n-1}A^{n-1}
\]
for some $c_0,c_1,\ldots, c_{n-1}\in \R$.
\end{theorem}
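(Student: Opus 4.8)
The plan is to reduce to the case where $A$ is a companion matrix and then run a short cyclic‑vector argument. Recall that a matrix is nonderogatory if and only if its minimal polynomial equals its characteristic polynomial $c(t)$, if and only if it is similar, over the base field, to the companion matrix $C$ of $c(t)$. Since $A$ is real, the theory of the rational canonical form furnishes a real invertible $S$ with $S^{-1}AS=C$. If $XA=AX$, then $Y:=S^{-1}XS$ satisfies $YC=CY$; and because $p(A)=S\,p(C)\,S^{-1}$ for every polynomial $p$, any identity $Y=\sum_i c_i C^{i}$ with $c_i\in\R$ transfers verbatim to $X=\sum_i c_i A^{i}$ with the same coefficients. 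So it suffices to treat the pair $(C,Y)$.

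For the companion matrix, first I would note that, in the standard ordering, $Ce_i=e_{i+1}$ for $1\le i<n$, so $C^{i-1}e_1=e_i$ and hence $\{e_1,Ce_1,\dots,C^{n-1}e_1\}$ is precisely the standard basis; that is, $e_1$ is a \emph{cyclic vector} for $C$. Expanding the first column $Ye_1\in\R^n$ in this basis gives real scalars $c_0,\dots,c_{n-1}$ with $Ye_1=\sum_{i=0}^{n-1}c_i\,e_{i+1}=\big(\sum_{i=0}^{n-1}c_iC^{i}\big)e_1=:p(C)\,e_1$. Now $YC=CY$ forces, for every $j\ge 0$, $Y(C^{j}e_1)=C^{j}(Ye_1)=C^{j}p(C)\,e_1=p(C)(C^{j}e_1)$, so $Y$ and $p(C)$ agree on a basis and are therefore equal. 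Transferring back through $S$ yields the claimed identity for $A$ with real coefficients and $\deg p\le n-1$.

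The one substantive ingredient, and the main obstacle, is the first step: that every nonderogatory matrix is similar over $\R$ to the companion matrix of its characteristic polynomial. This is standard (it is the statement that, viewing $\R^n$ as an $\R[t]$‑module via $A$, nonderogatory means there is a single invariant factor, equal to $c(t)$), but it is the only part that is not an immediate computation. A route that sidesteps canonical forms is a dimension count: the centralizer $\{X:XA=AX\}$ is the kernel of the real linear map $X\mapsto XA-AX$, so its dimension is unchanged on passing to $\C$; over $\C$ the Jordan form shows this dimension equals $\sum_i(2i-1)\,n_i$ summed over the Jordan block sizes $n_1\ge n_2\ge\cdots$ within, and then across, the eigenvalues, which equals $n$ exactly when there is one block per eigenvalue, i.e.\ when $A$ is nonderogatory. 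Since $\mathrm{span}\{I,A,\dots,A^{n-1}\}$ always lies in the centralizer and has dimension equal to $\deg$ of the minimal polynomial — also $n$ in the nonderogatory case — the inclusion must be an equality, which is exactly the assertion (real coefficients being recovered by taking the real part of a complex solution to the real linear system $X=\sum_i c_iA^{i}$).
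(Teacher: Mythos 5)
Your proof is correct. Note, though, that the paper offers no proof of this statement at all: it is quoted verbatim from Horn and Johnson \cite{HJ} (Theorem 3.2.4.2 there), so there is nothing internal to compare against. Your argument --- reduce to the companion matrix of the characteristic polynomial via the rational canonical form over $\R$, observe that $e_1$ is a cyclic vector with $C^{i-1}e_1=e_i$, read off the coefficients $c_0,\ldots,c_{n-1}$ from the first column of $Y$, and use $YC=CY$ to propagate the identity $Y=p(C)$ across the whole basis --- is the standard textbook proof, and it correctly delivers both the degree bound $\deg p\le n-1$ and the realness of the coefficients. Your alternative dimension count (centralizer dimension $\sum_i(2i-1)n_i$ versus $\dim\mathrm{span}\{I,A,\ldots,A^{n-1}\}=\deg$ of the minimal polynomial, both equal to $n$ exactly in the nonderogatory case) is also sound and is a nice way to see why nonderogatory is precisely the right hypothesis.
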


\begin{theorem}\label{thm:subdiag}
    If $\A$ is a proper Hessenberg pattern whose digraph has at least one $k$-cycle for each $k$, $2\leq k\leq n$, then $\A$ requires the nSMP.
\end{theorem}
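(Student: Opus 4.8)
The plan is to exploit the structure of a proper Hessenberg matrix together with Theorem~\ref{rem:Hessy}. Let $A \in Q(\A)$ and suppose $X$ satisfies the three nSMP conditions~(\ref{eq:had}), (\ref{eq:commute}), (\ref{eq:trace}). Since $\A$ is proper Hessenberg, every $A \in Q(\A)$ is nonderogatory, so from~(\ref{eq:commute}), which says $X^T$ commutes with $A$, Theorem~\ref{rem:Hessy} gives $X^T = c_0 I + c_1 A + \cdots + c_{n-1}A^{n-1}$ for some real scalars $c_0,\dots,c_{n-1}$. Our goal reduces to showing all the $c_i$ vanish. The trace conditions~(\ref{eq:trace}) say ${\rm tr}(X^T A^k) = 0$ for $0 \le k < n$; substituting the polynomial expression yields the linear system $\sum_{i=0}^{n-1} c_i\, {\rm tr}(A^{i+k}) = 0$ for $0 \le k < n$. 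So $X = \O$ will follow once we show the $n \times n$ matrix $M$ with entries $M_{k,i} = {\rm tr}(A^{i+k})$ (a Hankel matrix in the power sums $p_j = {\rm tr}(A^j)$, $0 \le j \le 2n-2$) is nonsingular — but it need not be in general, so this route alone is not enough; we must also bring in the Hadamard condition~(\ref{eq:had}).

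So the first substantive step is to use~(\ref{eq:had}) to kill the low-degree coefficients. The key observation is the one already flagged before the statement: if $D(\A)$ has a $k$-cycle then $A$ has a nonzero entry on the $(k-1)$st subdiagonal, and conversely the $(k-1)$st subdiagonal of $A^j$ is zero for $j < k-1$ (a proper Hessenberg matrix pushes mass at most one subdiagonal lower per power) and its $(k-1)$st subdiagonal for $A^{k-1}$ is a product of superdiagonal entries times nothing else — in particular it is nonzero in a position where $A$ itself is nonzero. Concretely, look at the bottom-left corner and the subdiagonals from the bottom up. Condition~(\ref{eq:had}) forces $X^T$ to be zero in every position where $A$ is nonzero, i.e.\ on the superdiagonal and, by hypothesis, in at least one position on each subdiagonal $1,\dots,n-1$ together with at least one diagonal position if there is a loop. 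Reading off the $(n-1)$st subdiagonal (the single entry $(A^{n-1})_{n,1}$, which equals the product of the $n-1$ superdiagonal entries, hence is nonzero) in the identity $X^T = \sum c_i A^i$: only $A^{n-1}$ contributes there, so $c_{n-1}(A^{n-1})_{n,1} = (X^T)_{n,1} = 0$ by~(\ref{eq:had}) if $(n,1)$ is a nonzero position of $A$ — and since $D(\A)$ has an $n$-cycle, $\A_{n,1}\neq 0$ after a suitable relabelling, so indeed $(X^T)_{n,1}=0$ and thus $c_{n-1}=0$. Peeling off subdiagonals one at a time, using at each stage a $k$-cycle to locate a nonzero entry of $A$ on the $(k-1)$st subdiagonal where $\sum c_i A^i$ is forced to vanish and where only $A^{k-1}$ (after the higher $c_i$ are already known to be $0$) contributes its leading subdiagonal part, we should be able to conclude $c_{n-1} = c_{n-2} = \cdots = c_1 = 0$ inductively. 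That leaves $X^T = c_0 I$, and then a single trace condition, ${\rm tr}(X^T) = n c_0 = 0$, forces $c_0 = 0$, so $X = \O$.

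The main obstacle is the bookkeeping in the inductive peeling step: one must check carefully that when the $(k-1)$st subdiagonal of $A^{k-1}$ is evaluated at the position coming from the chosen $k$-cycle, it is genuinely nonzero (it is a sum of products of entries of $A$ along walks of length $k-1$ from the tail to the head; for a proper Hessenberg matrix the only such walks reaching that far down use all available superdiagonal steps, but there can be several such walks and, with signs, one needs to be sure they do not cancel) and, simultaneously, that the already-eliminated higher powers $A^{k}, \dots, A^{n-1}$ contribute nothing on that subdiagonal — the latter is automatic since $A^j$ has all entries below the $(j)$th... rather, since $(A^j)$ vanishes on subdiagonals $k-1 < j$ is false in the wrong direction; the correct statement is that lower powers vanish on deep subdiagonals, so it is the lower $c_i$ (not the higher ones) that are irrelevant at stage $k$, which is exactly why the induction must run from the deepest subdiagonal upward. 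I would organize the argument so that at stage $k$ (processing the $(k-1)$st subdiagonal) the coefficients $c_k,\dots,c_{n-1}$ are already known to be zero, $A^j$ for $j \le k-2$ contributes zero on that subdiagonal, so the $(X^T)$-entry there equals $c_{k-1}$ times a nonzero scalar; the genuinely delicate point, and the one requiring the most care, is verifying that scalar is nonzero — which is where the structural fact that $D(\A)$ contains a $k$-cycle, rather than merely some nonzero entry on the $(k-1)$st subdiagonal of $A$, must be used, and where a sign-independence or leading-term argument is needed to rule out cancellation.
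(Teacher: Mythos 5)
Your overall strategy is the same as the paper's (nonderogatory $\Rightarrow$ $X^T=\sum_{i=0}^{n-1}c_iA^i$ by Theorem~\ref{rem:Hessy}, then peel off $c_{n-1},\dots,c_1$ using condition (\ref{eq:had}) and the cycle hypothesis, then kill $c_0$ with ${\rm tr}(X^T)=0$), but there is a genuine gap caused by a consistent row/column transposition, and the ``delicate cancellation point'' you flag as unresolved is precisely the symptom of it. Condition (\ref{eq:had}) forces $X$, not $X^T$, to vanish where $A$ is nonzero; so from a nonzero entry $A_{p,q}\neq 0$ on the $(k-1)$st subdiagonal ($p-q=k-1$, supplied by the $k$-cycle) you learn $(X^T)_{q,p}=0$ at the \emph{above}-diagonal position $(q,p)$, not $(X^T)_{p,q}=0$. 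Your version evaluates $\sum c_iA^i$ at below-diagonal positions such as $(n,1)$, where (a) the Hadamard condition gives you nothing about $X^T$ (you would need $A_{1,n}\neq 0$, which fails for $n\geq 3$), (b) the claim that only $A^{n-1}$ contributes is false ($A$ itself has $A_{n,1}\neq 0$ there), and (c) $(A^{k-1})_{p,q}$ for $p>q$ really is a sum over many walks with possible sign cancellation, as you observe. A proper lower Hessenberg matrix has no vanishing structure below the diagonal in its powers, so the induction cannot be run there.

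The repair is to work on the correct side, which eliminates the cancellation issue entirely rather than requiring a leading-term argument. Since each arc $(u,v)$ of $D(\A)$ satisfies $v\leq u+1$, one has $(A^j)_{q,p}=0$ whenever $p>q+j$, and $(A^{p-q})_{q,p}=\prod_{m=q}^{p-1}A_{m,m+1}\neq 0$ because the only length-$(p-q)$ walk from $q$ up to $p$ uses every superdiagonal step exactly once. Hence, assuming $c_{k},\dots,c_{n-1}=0$ already, the identity $0=(X^T)_{q,p}=c_{k-1}(A^{k-1})_{q,p}$ at the position $(q,p)$ with $p-q=k-1$ coming from the $k$-cycle gives $c_{k-1}=0$ with no sum over walks at all. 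This is exactly the paper's argument; the $k$-cycle is needed only to guarantee that $A$ has \emph{some} nonzero entry on the $(k-1)$st subdiagonal so that (\ref{eq:had}) applies at the transposed position, not to control signs.
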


\begin{proof}
    Let $\A$ be a proper Hessenberg pattern 
    with at least one nonzero entry on each subdiagonal.
    Let $A\in Q(\A).$ Suppose $A\circ X=\O$, $AX^T-X^TA=\O$, and ${\rm{tr}}(X^TA^k)=0$ for $0\leq k<n.$
    Since $A$ is a proper Hessenberg matrix, $A$ is nonderogatory. By Theorem~\ref{rem:Hessy},
    $X^T=c_0I+c_1A+\cdots +c_{n-1}A^{n-1}$ for
    some $c_0,c_1,\ldots,c_n\in \R$. Since $(X^T)_{1n}=0$ and $(A^k)_{1n}\neq 0$ only for $k=n-1$,
    it follows that $c_{n-1}=0.$ Iterating, we can demonstrate that $c_{n-2}=c_{n-3}=\cdots =c_1=0.$ For suppose $X^T=c_0I+c_1A+\cdots + c_jA^j$ for some $j$, $1\leq j<n$. Then  $(A^j)^T$ has all nonzero entries on the $j$th subdiagonal, but the smaller powers of $A^T$ have only zero entries on the $j$th subdiagonal. However, $X$ has a zero entry on the $j$th subdiagonal by hypothesis. Thus $c_j=0$. Hence $X^T=c_0I.$ But then ${\rm{tr}}(X^T)=0$ implies $c_0=0$.
    Therefore $X=\O$, and thus $A$ has the nSMP. Therefore $\A$ requires the nSMP.
\end{proof}

\section{Patterns that allow or require distinct eigenvalues}\label{sec:hessy}

In this section we characterize the patterns $\A$ that allow the nSMP, first in terms of patterns that allow distinct eigenvalues, then in terms of the number of vertices in a largest composite cycle in $D(\A)$. This allows a characterization of the star patterns that do not allow the nSMP. Further, an open question of \cite{CGSV} is answered in the affirmative.

\begin{theorem}\label{thm:hessy}
    An $n\times n$ sign pattern $\A$ allows the nSMP if and only if $\A$ allows distinct eigenvalues.
\end{theorem}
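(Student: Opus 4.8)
The plan is to prove the two directions separately, with the forward direction being essentially immediate from Theorem~\ref{thm:bifurcation}. If $\A$ allows the nSMP, then some $A \in Q(\A)$ has the nSMP, and Theorem~\ref{thm:bifurcation} says $\A$ allows $n$ distinct eigenvalues; since $A$ is $n \times n$, having $n$ distinct eigenvalues means all eigenvalues are distinct, so $\A$ allows distinct eigenvalues. So the real content is the converse.

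For the converse, suppose $A \in Q(\A)$ has $n$ distinct eigenvalues; I want to produce a (possibly different) matrix in $Q(\A)$ with the nSMP. My first instinct is that $A$ itself already works, and the key observation is that a matrix with $n$ distinct eigenvalues is nonderogatory (each eigenvalue has geometric multiplicity one). Then I would invoke Theorem~\ref{rem:Hessy}: any $X$ with $AX^T - X^T A = \O$ (i.e.\ $X^T$ commutes with $A$) must be a polynomial $X^T = c_0 I + c_1 A + \cdots + c_{n-1} A^{n-1}$. Now the trace conditions $\mathrm{tr}(X^T A^k) = 0$ for $0 \le k < n$ become a linear system in $c_0, \dots, c_{n-1}$ whose coefficient matrix has entries $\mathrm{tr}(A^{j+k})$ for $0 \le j,k < n$ — the Hankel moment matrix of $A$. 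The determinant of this matrix is (up to a nonzero constant) the discriminant-type quantity $\prod_{i<j}(\lambda_i - \lambda_j)^2$ where $\lambda_1, \dots, \lambda_n$ are the eigenvalues of $A$ (this is the classical Newton/power-sum identity for the Vandermonde-type Gram matrix of power sums). Since the eigenvalues are distinct, this determinant is nonzero, so the only solution is $c_0 = \cdots = c_{n-1} = 0$, hence $X^T = \O$, hence $X = \O$. Thus $A$ has the nSMP, and $\A$ allows the nSMP. Note this argument does not even use the Hadamard condition (\ref{eq:had}); distinct eigenvalues alone force the nSMP, which incidentally reproves the later-cited fact that requiring distinct eigenvalues implies requiring the nSMP.

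The main technical point to get right is the claim that the Hankel matrix $H = (\mathrm{tr}(A^{i+j}))_{0 \le i,j \le n-1}$ is nonsingular exactly when $A$ has distinct eigenvalues. I would justify this by writing $\mathrm{tr}(A^m) = \sum_{i=1}^n \lambda_i^m$ (counting eigenvalues with multiplicity — but here all multiplicities are one), so $H = V^T V$ where $V$ is the Vandermonde matrix with columns $(1, \lambda_i, \lambda_i^2, \dots, \lambda_i^{n-1})^T$; then $\det H = \det(V)^2 = \prod_{i<j}(\lambda_i - \lambda_j)^2 \ne 0$ since the $\lambda_i$ are distinct. (Over $\C$ this is fine; $A$ is real but its eigenvalues and the matrix $V$ may be complex, and the identity $\det H = \det(V)^2$ still holds as a polynomial identity.) This is a short, standard computation, so I do not anticipate a genuine obstacle — the only care needed is the bookkeeping that distinctness of eigenvalues is used twice (once for nonderogatory, once for $\det H \ne 0$) and that the conclusion $X = \O$ needs no appeal to (\ref{eq:had}) at all.
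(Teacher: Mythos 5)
Your proof is correct and takes essentially the same route as the paper: the forward direction via Theorem~\ref{thm:bifurcation}, and the converse by showing a matrix $A$ with distinct eigenvalues is nonderogatory, writing $X^T$ as a polynomial in $A$ via Theorem~\ref{rem:Hessy}, and observing that the trace conditions give a linear system with coefficient matrix $V^TV$ for a Vandermonde matrix $V$, which is nonsingular since the eigenvalues are distinct. Your extra remark that the argument never uses condition~(\ref{eq:had}) and hence also yields Theorem~\ref{thm:requires} matches the paper's own observation following its proof.
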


\begin{proof}
If $\A$ allows the nSMP, then there is a matrix $A\in Q(\A)$ that has the nSMP. By Theorem~\ref{thm:bifurcation},
there is a matrix $B\in Q(\A)$ with distinct eigenvalues. Thus $\A$ allows distinct eigenvalues. 

For the converse,  suppose $\A$ is a pattern
that allows distinct eigenvalues. Then there is a matrix $A\in Q(\A)$ with 
eigenvalues $\lambda_1,\lambda_2,\ldots, \lambda_n$ which are all distinct. Note that this implies that $A$ is nonderogatory. Let $V$ be an $n\times n$ matrix with $V_{ij}=\lambda_i^{j-1}$.
Note that $V$ is a Vandermonde matrix, and hence is a nonsingular matrix since the eigenvalues of $A$ are distinct.
Suppose $A\circ X=\O$, $X^TA=AX^T$, and ${\rm{tr}}(X^TA^k)=0$ for $0\leq k\leq n-1.$ 
By Theorem~\ref{rem:Hessy},
$X^T=\displaystyle\sum_{k=0}^{n-1} c_kA^k$
for some $\mathbf{c}=(c_0,c_1,\ldots,c_{n-1})\in \R^n$. 
Note that, for $0\leq k\leq n-1$, 
\[0={\rm{tr}}(X^TA^k)={\rm{tr}}\left(\displaystyle\sum_{i=1}^{n} c_{i-1}A^{k+i-1}\right)=
\displaystyle\sum_{i=1}^{n} c_{i-1}{\rm{tr}}(A^{k+i-1}).
\] 
Since the trace of a matrix is the same as the sum of its eigenvalues, for $1\leq j\leq n,$
\[0=\displaystyle\sum_{i=1}^{n} c_{i-1}{\rm{tr}}(A^{j+i-2})=
\displaystyle\sum_{i=1}^{n} c_{i-1}  \left(  \displaystyle\sum_{m=1}^n \lambda_m^{j+i-2}\right)=
\displaystyle\sum_{i=1}^{n} c_{i-1}  (V^TV)_{ij}.\]
This system can be rewritten as $\mathbf{c}(V^TV)=\mathbf{0}.$  Since $V$ is nonsingular, $\mathbf{c}=\mathbf{0}$ and hence $X^T=\O$.
Therefore $A$ has the nSMP and $\A$ allows the nSMP.
\end{proof}

The $n\times n$ sign patterns that allow distinct eigenvalues are known to be those whose digraph includes a composite cycle of length  
$n-1$ or $n$ (see, e.g. \cite[Lemma (p.172)]{EJ}).

\begin{corollary}\label{cor:n-1}
An $n\times n$ sign pattern $\A$ allows the nSMP if and only if the largest composite cycle length in $D(\A)$ is at least $(n-1)$.
\end{corollary}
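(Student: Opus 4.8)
The plan is to combine Theorem~\ref{thm:hessy} with the cited characterization of sign patterns that allow distinct eigenvalues, so that the only real work is to reconcile two different phrasings of the same combinatorial condition on $D(\A)$: ``has a composite cycle of length $n-1$ or $n$'' versus ``largest composite cycle length is at least $n-1$.''

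First I would invoke Theorem~\ref{thm:hessy}: $\A$ allows the nSMP if and only if $\A$ allows $n$ distinct eigenvalues. Next I would quote the stated fact (attributed to \cite[Lemma (p.172)]{EJ}) that an $n\times n$ sign pattern allows distinct eigenvalues precisely when $D(\A)$ contains a composite cycle of length $n-1$ or of length $n$. Chaining these two equivalences gives: $\A$ allows the nSMP iff $D(\A)$ has a composite cycle of length $n-1$ or $n$.

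The small remaining point is to observe that ``$D(\A)$ has a composite cycle of length $n-1$ or $n$'' is equivalent to ``the largest composite cycle length in $D(\A)$ is at least $n-1$.'' One direction is immediate, since a composite cycle of length $n-1$ or $n$ witnesses a largest composite cycle length of at least $n-1$. For the other direction, note that a composite cycle lives on at most $n$ vertices, so its length is at most $n$; hence ``at least $n-1$'' forces the length to be exactly $n-1$ or $n$. This is the only place one must be slightly careful, and it is essentially a one-line observation rather than a genuine obstacle.

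I do not expect any serious difficulty here: the corollary is a direct translation of Theorem~\ref{thm:hessy} through a known combinatorial characterization, with a trivial bookkeeping step to match the two descriptions of the cycle condition. If anything needs emphasis, it is simply citing \cite{EJ} correctly and stating the max-length reformulation cleanly.

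\begin{proof}
By Theorem~\ref{thm:hessy}, $\A$ allows the nSMP if and only if $\A$ allows distinct eigenvalues. By \cite[Lemma (p.172)]{EJ}, $\A$ allows distinct eigenvalues if and only if $D(\A)$ has a composite cycle of length $n-1$ or length $n$. Since every composite cycle in $D(\A)$ uses at most $n$ vertices, it has length at most $n$; hence $D(\A)$ has a composite cycle of length $n-1$ or $n$ if and only if the largest composite cycle length in $D(\A)$ is at least $n-1$. Combining these equivalences yields the claim.
\end{proof}
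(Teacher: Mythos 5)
Your proposal is correct and matches the paper's (implicit) argument exactly: the paper states the corollary immediately after citing \cite[Lemma (p.172)]{EJ} for the characterization of patterns allowing distinct eigenvalues, and the corollary is simply Theorem~\ref{thm:hessy} chained with that lemma. Your extra remark that a composite cycle has length at most $n$, so ``at least $n-1$'' means ``$n-1$ or $n$,'' is the right (and only) bookkeeping step.
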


\begin{corollary}\label{cor:starallow}
An $n\times n$ star sign pattern  $\A$ does not allow the nSMP if and only if 
$D(\A)$ has less than $(n-3)$ non-centre loops.
\end{corollary}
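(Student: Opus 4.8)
The plan is to translate the general characterization in Corollary~\ref{cor:n-1} into combinatorial language for the specific structure of a star digraph. By Corollary~\ref{cor:n-1}, the $n\times n$ star sign pattern $\A$ allows the nSMP precisely when $D(\A)$ has a composite cycle on at least $n-1$ vertices, so $\A$ \emph{fails} to allow the nSMP precisely when every composite cycle in $D(\A)$ uses at most $n-2$ vertices. The task reduces to: understand the composite cycles of a star digraph and compute the maximum number of vertices they can cover.

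First I would catalogue the cycles available in $D(\A)$. Writing $v_1$ for the centre and $v_2,\dots,v_n$ for the non-centre vertices, the cycles of a star digraph are exactly (i) the loop at $v_1$ if $\A_{11}\neq 0$; (ii) the loop at each non-centre vertex $v_j$ that has $\A_{jj}\neq 0$; and (iii) the $2$-cycles $v_1\to v_j\to v_1$, which exist for every $j\in\{2,\dots,n\}$ since all the off-diagonal star entries are nonzero. There are no longer simple cycles, because any cycle through a non-centre vertex of length $\geq 2$ must enter and leave through $v_1$. Now I would observe that a composite cycle can use the centre $v_1$ in only one way at a time: either via its loop (if present), or via exactly one $2$-cycle $v_1\leftrightarrow v_j$. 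All remaining vertices that the composite cycle covers must be non-centre vertices covered by their own loops.

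Next I would maximize the number of vertices covered. If $\ell$ denotes the number of non-centre vertices with a loop, then the best composite cycle either (a) uses the $2$-cycle through some non-looped non-centre vertex $v_j$ (contributing $v_1$ and $v_j$), together with loops on all $\ell$ looped non-centre vertices, for a total of $\ell+2$ vertices — available whenever $\ell<n-1$, i.e.\ whenever there is at least one non-looped non-centre vertex; or (b) uses only loops: a loop at $v_1$ (if it exists) plus loops at all $\ell$ looped non-centre vertices, for at most $\ell+1$ vertices; or (c) uses the $2$-cycle through a looped vertex $v_j$ plus loops on the other $\ell-1$ looped non-centre vertices, again $\ell+1$ vertices. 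Hence the maximum composite cycle length in $D(\A)$ equals $\ell+2$ when $\ell<n-1$ and equals $\ell+1=n$ when $\ell=n-1$ (all non-centre vertices looped), except we must also double-check the all-looped case: if $\ell=n-1$ and additionally $\A_{11}\neq 0$ the maximum is $n$; if $\ell=n-1$ and $\A_{11}=0$ the maximum is still $n$ via a $2$-cycle plus the $n-2$ remaining loops. In every case where $\ell\le n-2$ the maximum is $\ell+2$.

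Finally I would assemble the corollary. By Corollary~\ref{cor:n-1}, $\A$ does \emph{not} allow the nSMP iff the maximum composite cycle length is $\le n-2$. Using the computation above, when $\ell\le n-2$ this maximum is $\ell+2$, and $\ell+2\le n-2$ iff $\ell\le n-4$, i.e.\ $\ell<n-3$; when $\ell\ge n-2$ (so $\ell=n-2$ or $\ell=n-1$) the maximum is $n$ or $n-1$ respectively (for $\ell = n-2$, it is $(n-2)+2 = n$), which is $\ge n-1$, so $\A$ does allow the nSMP. Thus $\A$ does not allow the nSMP iff $\ell<n-3$, which is the stated condition. The main obstacle, and the step deserving the most care, is the case analysis in the third paragraph: one must be sure that no composite cycle can do better than the options (a)--(c), which rests on the structural fact that every non-trivial cycle in a star digraph is a $2$-cycle through the centre, so at most one such $2$-cycle can appear in a vertex-disjoint collection.
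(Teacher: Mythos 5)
Your proposal is correct and follows the same route the paper intends: the corollary is stated as an immediate consequence of Corollary~\ref{cor:n-1}, and your computation that a star digraph with $\ell$ non-centre loops has maximum composite cycle length $\ell+2$ when $\ell\le n-2$ (and $n$ when $\ell=n-1$) is exactly the implicit argument. The only blemish is the final ``$n$ or $n-1$ respectively'' (for $\ell=n-1$ the maximum is $n$, not $n-1$), but since both values are at least $n-1$ this does not affect the conclusion.
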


A question was raised in \cite{CGSV}, whether every pattern that requires distinct eigenvalues must also require the nSMP.
The proof of Theorem~\ref{thm:hessy} 
demonstrates that if $A$ has distinct eigenvalues, then $A$ has the nSMP,
answering the question in the affirmative: 

\begin{theorem}\label{thm:requires}
If a sign pattern 
$\A$ requires distinct eigenvalues, then $\A$ requires the nSMP.    
\end{theorem}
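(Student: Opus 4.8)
The plan is to deduce Theorem~\ref{thm:requires} directly from the argument already carried out in the proof of Theorem~\ref{thm:hessy}, observing that nothing there depends on the matrix merely \emph{allowing} distinct eigenvalues --- the computation applies to \emph{any} matrix with distinct eigenvalues. So suppose $\A$ requires distinct eigenvalues and let $A\in Q(\A)$ be arbitrary. By hypothesis the eigenvalues $\lambda_1,\dots,\lambda_n$ of $A$ are distinct, hence $A$ is nonderogatory. I would then run the same chain of equalities: if $A\circ X=\O$, $X^TA=AX^T$, and ${\rm tr}(X^TA^k)=0$ for $0\le k\le n-1$, then Theorem~\ref{rem:Hessy} gives $X^T=\sum_{k=0}^{n-1}c_kA^k$ for some $\mathbf c\in\R^n$; expanding the trace conditions and using that $\mathrm{tr}(A^m)=\sum_m\lambda_m^m$ rewrites the system as $\mathbf c(V^TV)=\mathbf 0$, where $V$ is the Vandermonde matrix $V_{ij}=\lambda_i^{j-1}$, which is nonsingular because the $\lambda_i$ are distinct. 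Hence $\mathbf c=\mathbf 0$, so $X=\O$, meaning $A$ has the nSMP. Since $A$ was an arbitrary realization of $\A$, the pattern $\A$ requires the nSMP.

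In practice, rather than repeating the computation I would phrase the proof as a short observation: ``The proof of Theorem~\ref{thm:hessy} shows that any matrix $A$ with distinct eigenvalues has the nSMP; the claim follows immediately.'' The only care needed is to note explicitly that the proof of Theorem~\ref{thm:hessy}, although phrased in terms of a pattern that allows distinct eigenvalues, in fact establishes the stronger pointwise statement, so that it can be applied to every $A\in Q(\A)$ when $\A$ requires distinct eigenvalues.

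There is no real obstacle here; the theorem is essentially a corollary of the work already done, and the note following Theorem~\ref{thm:hessy} in the excerpt already flags this. The one thing to double-check is that the trace conditions $0\le k\le n-1$ suffice --- they do, since this is exactly the range appearing in condition~(\ref{eq:trace}) defining the nSMP, and the Vandermonde system uses precisely the $n$ power sums $\mathrm{tr}(A^0),\dots,\mathrm{tr}(A^{2n-2})$ assembled as the Gram matrix $V^TV$, all of which are controlled by these $n$ equations through the polynomial relation $A^n=\,$(lower powers). If a self-contained proof were wanted, I would simply reproduce the displayed computation from the proof of Theorem~\ref{thm:hessy} verbatim with ``allows'' replaced by ``requires'' throughout.
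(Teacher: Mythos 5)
Your proposal is correct and matches the paper's own proof exactly: the paper likewise just observes that the argument in the proof of Theorem~\ref{thm:hessy} is a pointwise statement (any matrix with distinct eigenvalues has the nSMP) and applies it to every realization of $\A$. The only slips are cosmetic --- the Gram matrix $V^TV$ involves the $2n-1$ power sums $\mathrm{tr}(A^0),\dots,\mathrm{tr}(A^{2n-2})$, not $n$ of them, though as you note they are all determined by the $n$ trace conditions via Cayley--Hamilton.
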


The next example shows that the converse of Theorem~\ref{thm:requires} does not hold.

\begin{example} Let 
\[  \A=\begin{bmatrix}
0&+&0\\
+&0&+\\
+&0&0\\
\end{bmatrix} {\rm {\qquad and \qquad }}
A=\begin{bmatrix}
0&1&0\\
3&0&1\\
2&0&0\\
\end{bmatrix}.
\]
Then $\A$ requires the nSMP, by Theorem~\ref{thm:subdiag}. 
However, $\A$ does not require distinct eigenvalues since 
$A\in Q(\A)$ and $A$ has a repeated eigenvalue of $-1$. (That $\A$ does not require distinct eigenvalues can also be determined from the characterization of $3\times 3$ patterns that require distinct 
eigenvalues in \cite{LH}.)
\end{example}

\section{Patterns that allow but do not require the nSMP}\label{sec:allows}

In this section, we describe the construction of some patterns that allow the nSMP but do not require the nSMP. We start by stating a general result about bipartite patterns, then describe some patterns obtained by appending a star pattern to another pattern based on examples in Section~\ref{sec:star}, and finally make some observations about spectrally arbitrary patterns.

A sign pattern $\A$ is \emph{bipartite} if the underlying graph 
of $D(\A)$ is bipartite. Thus a bipartite pattern has no cycles of odd length 
 in $D(\A)$.

\begin{theorem}\label{thm:anotr}
Let $\A$ be an $n\times n$ bipartite sign pattern having a pair of intersecting oppositely signed cycles of different lengths 
and suppose that either 
$n$ is even and $\A$ is sign-nonsingular, or
the largest composite cycle of $D(\A)$ is 
length $(n-1)$. Then $\A$ allows 
the nSMP but does not require the nSMP.
\end{theorem}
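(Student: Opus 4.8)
The plan is to establish the two assertions separately, using the structural results already in hand. For the ``allows'' direction, I would invoke Corollary~\ref{cor:n-1}: I need only check that the largest composite cycle length in $D(\A)$ is at least $n-1$. In the case where the largest composite cycle has length $n-1$ this is immediate. In the case where $n$ is even and $\A$ is sign-nonsingular, sign-nonsingularity implies $\det(A)\neq 0$ for $A\in Q(\A)$, so the standard permutation expansion of the determinant is nonzero; hence there is a nonzero term, which corresponds to a collection of disjoint cycles covering all $n$ vertices, i.e. a composite $n$-cycle. Either way the hypothesis of Corollary~\ref{cor:n-1} is met, so $\A$ allows the nSMP.

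For the ``does not require'' direction, I would exhibit a specific matrix $A\in Q(\A)$ together with a nonzero $X$ satisfying the three nSMP conditions~(\ref{eq:had}),~(\ref{eq:commute}),~(\ref{eq:trace}). The key feature to exploit is the pair of intersecting, oppositely signed cycles $C_1, C_2$ of different lengths, say lengths $p < q$. Because the pattern is bipartite, both $p$ and $q$ are even, and $C_1, C_2$ lie in a common ``theta''-type subgraph. The idea is to choose the realization $A$ so that the subgraph supported on $C_1\cup C_2$ carries a weighting making a carefully chosen $X$ work, while all entries of $A$ outside this subgraph are set so that the corresponding rows/columns of $X$ are forced to be zero by~(\ref{eq:had}) and~(\ref{eq:commute}). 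Concretely, I would build $X$ supported on the arcs (or a related set of positions) determined by the symmetric difference $C_1\triangle C_2$, with weights arising from products of the $A$-entries around the two cycles — this is the standard ``Example~\ref{ex:starloops}/\ref{ex:starthree}''-style construction, generalized. The opposite signs of $C_1$ and $C_2$ are exactly what allows the competing contributions in the commutator~(\ref{eq:commute}) and in each trace ${\rm tr}(X^TA^k)$ to cancel.

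The main obstacle, and the step I would spend the most care on, is verifying the trace conditions ${\rm tr}(X^TA^k)=0$ for all $0\leq k<n$ simultaneously. The Hadamard and commutator conditions are comparatively local and can be arranged by a direct entrywise check; but the trace conditions couple all powers of $A$. Here I would argue as follows: since $X^TA=AX^T$ can be arranged (in fact $X^TA=cX^T$ for a scalar, or $X^TA=\O=AX^T$, as in the star examples), the conditions ${\rm tr}(X^TA^k)=0$ reduce to a single condition ${\rm tr}(X^T)=0$ (or ${\rm tr}(X^TA)=0$), which is built into the construction by making $X$ have zero diagonal. The delicate point is confirming that $X^TA-AX^T=\O$ genuinely holds for the chosen $A$ and $X$; this is where bipartiteness (no odd cycles, so the relevant digraph walks have controlled parity) and the requirement that $C_1, C_2$ have \emph{different} lengths enter, ensuring the nonzero positions of $X^TA$ and of $AX^T$ coincide and the entries match. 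I would also need to double-check that $X\neq\O$, which follows because the defining cycle-product weights are nonzero by our choice of a realization with all prescribed entries nonzero. Once $A$ and $X$ are in hand with these properties verified, $A$ fails the nSMP, so $\A$ does not require it, completing the proof.
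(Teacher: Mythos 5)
Your argument for the ``allows'' half is correct and is essentially the paper's: sign-nonsingularity forces a composite $n$-cycle (every nonzero term of the determinant expansion is a permutation digraph), so in either case the largest composite cycle has length at least $n-1$ and Corollary~\ref{cor:n-1} applies.

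The ``does not require'' half, however, has a genuine gap. You propose to exhibit an explicit realization $A$ and a nonzero certificate $X$ supported near the two oppositely signed cycles, but you never actually produce the construction: the support of $X$, the weights, the verification of $X^TA-AX^T=\O$, and the reduction of the trace conditions are all described only as things you ``would'' arrange. The claims that one can force $X^TA=cX^T$ or $X^TA=\O=AX^T$, and that conditions (\ref{eq:had}) and (\ref{eq:commute}) will zero out $X$ away from $C_1\cup C_2$, are not justified for an arbitrary bipartite pattern --- the rest of the digraph is unconstrained, and the commutator condition couples $X$ to the entire matrix, not just to the two distinguished cycles. Tellingly, your sketch never uses the hypotheses that $\A$ is sign-nonsingular with $n$ even, or that the largest composite cycle has length $n-1$; but these hypotheses are exactly what the paper needs for this direction. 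The paper's proof is indirect: by \cite[Theorem~3.13]{BBCCVV} the intersecting oppositely signed cycles of different lengths in a bipartite pattern force $\A$ to allow a repeated \emph{real} eigenvalue, while \cite[Corollary~2.12]{BBCCVV} shows that under either hypothesis $\A$ does \emph{not} allow exactly $n-1$ distinct eigenvalues; if a realization $A$ with a repeated real eigenvalue had the nSMP, the second part of Theorem~\ref{thm:bifurcation} would force $\A$ to allow $n-1$ distinct eigenvalues, a contradiction. No certificate $X$ is ever constructed --- and the paper explicitly remarks (after Example~\ref{ex:bipartite1}) that finding such certificates ``may or may not be straightforward,'' which is precisely the difficulty your sketch defers rather than resolves. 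To repair your approach you would either have to carry out the explicit construction in full generality (likely hard) or switch to the paper's eigenvalue-multiplicity argument.
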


\begin{proof}
Suppose $\A$ is a bipartite sign pattern with a pair of 
intersecting oppositely signed cycles of different lengths.
Then by \cite[Theorem~3.13]{BBCCVV}, $\A$ allows a repeated real eigenvalue. Thus $\A$ allows a matrix $A$ with less than $n$ distinct eigenvalues. 

If the largest composite cycle of $D(\A)$ has length
$n-1$, then $n$ is odd and by \cite[Corollary 2.12(i)]{BBCCVV}, 
$\A$ does not allow a matrix with $n-1$ eigenvalues. 
Likewise, by \cite[Corollary 2.12(ii)]{BBCCVV}, if $\A$ is sign-nonsingular and $n$ is even, then $\A$ does not allow 
$n-1$ eigenvalues. Therefore by Theorem~\ref{thm:bifurcation}, $A$ does not have the nSMP, so $\A$ does not require the nSMP.

If a pattern is sign-nonsingular, then the corresponding digraph has exactly one composite cycle of length $n$. Thus, with the assumptions given, either $D(\A)$ has a composite cycle of length $n$ or $n-1.$  By Corollary~\ref{cor:n-1}, $\A$ allows the nSMP.
\end{proof}

\begin{example}\label{ex:bipartite1}
The digraphs in Figure~\ref{fig:di4} (with the nonsigned arcs considered positive) correspond to  bipartite patterns $\A$ that allow the nSMP but do not require the nSMP by Theorem~\ref{thm:anotr}
since $n =4$ and $\A$ is sign-nonsingular.
For each pattern respectively, the following matrices $A_1$ and $A_2$ are realizations which do not have the nSMP since  $X_1$ and  $X_2$
are corresponding nonzero matrices satisfying the nSMP conditions (\ref{eq:had}),(\ref{eq:commute}),
and (\ref{eq:trace}).
\begin{small}
\[
A_1=\left[\begin{array}{rrrr}
0&1&0&0\\
2&0&-1&0\\
0&0&0&1\\
1&0&0&0
\end{array}\right]\ 
X_1=\left[\begin{array}{rrrr}
1&0&1&0\\
0&1&0&1\\
-1&0&-1&0\\
0&-1&0&-1
\end{array}\right] \
A_2=\left[\begin{array}{rrrr}
0&1&0&0\\
3&0&-1&0\\
0&1&0&1\\
1&0&0&0
\end{array}\right] \
X_2=\left[\begin{array}{rrrr}
-2&0&-4&0\\
0&-1&0&-1\\
1&0&2&0\\
0&1&0&1
\end{array}\right]
\]\end{small}
While it may or may not be straightforward (and it is not necessary by Theorem~\ref{thm:anotr}) to determine matrix realizations that do not have the nSMP, the patterns in Figure~\ref{fig:di4} can be generalized in various ways to $n\times n$ sign patterns with $n$ even,  to obtain patterns that allow but do not require the nSMP. One general example would be to simply replace the $4$-cycle with an $n$-cycle with $n$ even.
\end{example}
\begin{figure}[h!t]
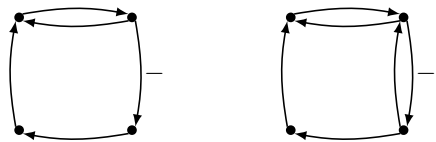

    \begin{center}
\tikzpicture
\node (2) at (0.75,0.75)[place]{};
\node (1) at (-0.75,0.75)[place]{};
\node (4) at (-0.75,-0.75)[place]{};
\node (23) at (1.05, 0)[]{$-$};
\node (3) at (0.75,-0.75)[place]{};
\draw [right] (2.south west) to [bend left=10] (1.south east);
\draw [right] (2.south east) to [bend left=10]  (3.north east);
\draw [right] (3.south west) to [bend left=10]  (4.south east);
\draw [right] (4.north west) to [bend left=10] (1.south west);
\draw [right] (1.north east) to [bend left=10]  (2.north west);
\endtikzpicture \qquad \qquad 
\tikzpicture
\node (2) at (0.75,0.75)[place]{};
\node (1) at (-0.75,0.75)[place]{};
\node (4) at (-0.75,-0.75)[place]{};
\node (23) at (1.05, 0)[]{$-$};
\node (3) at (0.75,-0.75)[place]{};
\draw [right] (2.south west) to [bend left=10] (1.south east);
\draw [right] (2.south east) to [bend left=10]  (3.north east);
\draw [right] (3.north west) to [bend left=10] (2.south west);
\draw [right] (3.south west) to [bend left=10]  (4.south east);
\draw [right] (4.north west) to [bend left=10] (1.south west);
\draw [right] (1.north east) to [bend left=10]  (2.north west);
\endtikzpicture 
\end{center}
\caption{Digraphs of patterns that allow but do not require nSMP.}\label{fig:di4}
\end{figure}

\begin{example}
The digraph in Figure~\ref{fig:di42} is a bipartite digraph satisfying the second condition of Theorem~\ref{thm:anotr} 
and hence the corresponding pattern will allow but not require the nSMP. This digraph can be easily extended to larger patterns by replacing the $4$-cycle by an $(n-1)$-cycle for odd $n\geq 7$. Another option is to attachd $2$-cycles to three or four of the vertices of the the $4$-cycle in Figure~\ref{fig:di42} to create other patterns which allow but not require the nSMP (the case with four $2$-cycles would be a sign-nonsingular example). 
\end{example}

\begin{figure}[h!t]
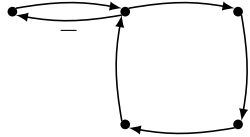

    \begin{center}
\tikzpicture
\node (5) at (-2.25,0.75)[place]{};
\node (2) at (0.75,0.75)[place]{};
\node (1) at (-0.75,0.75)[place]{};
\node (4) at (-0.75,-0.75)[place]{};
\node (15) at (-1.5, 0.50)[]{$-$};
\node (3) at (0.75,-0.75)[place]{};
\draw [right] (5.north east) to [bend left=10] (1.north west);
\draw [right] (1.south west) to [bend left=10] (5.south east);
\draw [right] (2.south east) to [bend left=10]  (3.north east);
\draw [right] (3.south west) to [bend left=10]  (4.south east);
\draw [right] (4.north west) to [bend left=10] (1.south west);
\draw [right] (1.north east) to [bend left=10]  (2.north west);
\endtikzpicture 
\end{center}
\caption{Digraph of a pattern that allows but does not require the nSMP.}\label{fig:di42}
\end{figure}

The patterns in Examples~\ref{ex:starloops} and \ref{ex:starthree} can also be extended to create larger classes of graphs that allow but do not require the nSMP, by attaching a star to an appropriate digraph that allows the nSMP.

\begin{example}
Let $\G$ be a sign pattern on $(n-2)$ vertices that has a composite $(n-3)$-cycle. 
Choose any vertex $v$ of $D(\G)$ such that there is a composite $(n-3)$-cycle  $C$ on the remaining vertices of $D(\G)$. Without loss of generality, we may assume
that $v$ is the first vertex of $D(\G)$. 
Attach a pair of oppositely signed $2$-cycles to $v$ to obtain a sign pattern 
$\A$ with $D(\A)$ as the first digraph in Figure~\ref{fig:startwothree}. Then $\A$ allows the nSMP by Corollary~\ref{cor:n-1} since $D(\A)$ has a
composite $(n-1)$-cycle consisting of $C$ and a $2$-cycle. By choosing 
$A\in Q(\A)$ and $X\neq \O$
as in Example~\ref{ex:starloops}, then $A\circ X=\O$, $X^TA-AX^T=\O$ and
${\rm{tr}}(X^TA)=0$ for $0\leq k<n$. Thus $A$ does not have the nSMP. Therefore, $\A$ allows the nSMP, but $\A$ does not require the nSMP.
\end{example}

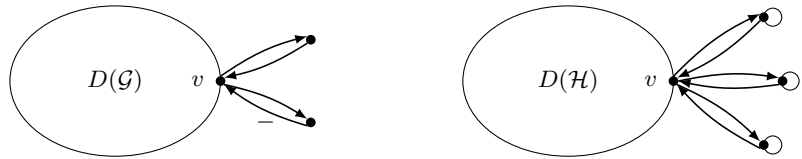
\begin{figure}[ht]
	\begin{center}
	\begin{small}	
		\begin{tikzpicture}
        \draw (0,0) ellipse (14mm and 10mm) {};
		\node at (0,0) {$D(\G)$};        
         \node[place] (1) at (1.4,0){};    
         \node at (1.1,0){$v$};
         \node[place] (3) at (2.6,-0.55){};       \node[place] (2) at (2.6,0.55){};
         \node (13) at (2, -0.55)[]{$-$};
         \draw [right] (1.north) to [bend left=10] (2.north west);
         \draw [right] (2.south west) to [bend left=10] (1.north east);
         \draw [right] (1.south east) to [bend left=10] (3.north west);
         \draw [right] (3.south west) to [bend left=10] (1.south east);
         \end{tikzpicture}\qquad\qquad\qquad 
         \begin{tikzpicture}
        \draw (0,0) ellipse (14mm and 10mm) {};
		\node at (0,0) {$D(\cH)$};   
            \node at (1.1,0){$v$};
		  \node[place] (1) at (1.4,0){};        
         \node[place] (2) at (2.6,0.85){};        
         \node[place] (3) at (2.85,0){};
         \node[place] (4) at (2.6,-0.85){};
         \draw (2.72,0.85) circle (1.2mm) {};
         \draw (2.95,0) circle (1.2mm) {};
         \draw (2.72,-0.85) circle (1.2mm) {};
         \draw [right] (1.north) to [bend left=10] (2.north west);
         \draw [right] (2.south west) to [bend left=10] (1.north east);
         \draw [right] (1.east) to [bend left=10] (3.north west);
         \draw [right] (3.south west) to [bend left=10] (1.east);
         \draw [right] (1.south east) to [bend left=10] (4.north west);
         \draw [right] (4.south west) to [bend left=10] (1.south east);
         \end{tikzpicture} 
      \end{small}  
   \end{center}
   \caption{Digraph constructions for patterns that allow but do not require the nSMP.}\label{fig:startwothree}
	\end{figure}

\begin{example}
    Let $\cH$ be any sign pattern on $(n-3)$ vertices that has a composite cycle $C$ of length $(n-4)$ or $(n-3)$. Choose any vertex $v$ of $D(\cH)$. Without loss of generality, we may assume $v$ is the first vertex of $D(\cH)$. Attach three positively signed $2$-cycles to $v$ with positively signed loops attached to the three new vertices, to obtain a sign
    pattern $\A$ with $D(\A)$ as the second digraph in Figure~\ref{fig:startwothree}. Then $\A$ allows the nSMP by Corollary~\ref{cor:n-1} since $C$ along with the three loops, is a composite cycle of length $(n-1)$ or $n$ in $D(\A)$. By choosing $A\in Q(\A)$ and $X\neq \O$ as in Example~\ref{ex:starthree},
    then $A\circ X=\O$, $X^TA-AX^T=\O$ and
${\rm{tr}}(X^TA)=0$ for $0\leq k<n$. Thus $A$ does not have the nSMP. Therefore $\A$ allows the nSMP, but does not require the nSMP.
    \end{example}

An $n\times n$ sign pattern  $\A$ is \emph{spectrally arbitrary} if for any set of $n$ complex numbers closed under conjugation, there is a matrix $A\in Q(\A)$ that has the set
of complex numbers as its spectrum. In other words, the combinatorial sign structure of the pattern does not restrict the possible eigenvalues of a matrix with the pattern if the pattern is spectrally arbitrary. Every spectrally 
arbitrary sign pattern allows the nSMP by Theorem~\ref{thm:hessy}.
Given that the nSMP property is used to give insight into eigenvalue multiplicities of a sign pattern, one might expect that a spectrally arbitrary pattern would require the nSMP. However, this
is not the case, as the next example demonstrates.

\begin{example}\label{ex:sap}
Let 
\[ 
\A=\begin{bmatrix}
+&+&0&0&0\\
0&0&+&+&+\\
-&-&0&0&0\\
0&0&-&0&0\\
0&0&0&-&-\\
\end{bmatrix},\  
A=\left[\begin{array}{rrrrr}
1&1&0&0&0\\
0&0&1&1&1\\
-1&-1&0&0&0\\
0&0&-1&0&0\\
0&0&0&-1&-b\\
\end{array}\right]
{\rm {\ and \ }}
X=\left[ \begin{array}{rrrrr}
0&0&0&-1&1\\
0&0&0&0&0\\
0&0&0&-1&1\\
0&0&0&0&0\\
0&0&0&0&0\\
\end{array}\right].
\]
Then $\A$ is spectrally arbitrary by \cite[Theorem~4.7]{CFV} since
$\A=\mathcal{C}_{5,2}$ in \cite{CFV}. Note that $A\in Q(\A)$ for every positive $b\in \R$.
If $b=2$, then $A$ has the nSMP. But if
$b=1$, then $A$ does not have the nSMP since in this case, the displayed $X\neq \O$ satisfies $A\circ X=\O$, $AX^T=\O=X^TA$, and ${\rm{tr}}(X^TA^k)=0$ for $0\leq k\leq 4$. Thus $\A$ is a spectrally arbitrary pattern that allows the nSMP but does not require the nSMP. 
\end{example}

Example~\ref{ex:sap}  generalizes to all the patterns $\cC_{n,2}$ with odd $n\geq 5$ (as in \cite{CFV}) which are spectrally arbitrary. 
Taking $A\in Q(\cC_{n,2})$ with entries having absolute value $1$ and with $X$ having the same $3\times 2$ nonzero block in the right corner as above, will demonstrate that $\cC_{n,2}$ does not require the nSMP but does allow the nSMP.

\section{Small patterns and reducible patterns}\label{sec:small}

In this section we describe the $n\times n$  sign patterns with respect to the nSMP for $n\leq 3$. We also make some remarks about reducible patterns.
Note that a $1\times 1$ pattern $\A$ always requires the nSMP since 
$X=[0]$ if ${\rm{tr}}(X)=0$.

\begin{theorem}\label{thm:order2}
Suppose $\A$ is a $2\times 2$ sign pattern. 
If $\A$ is irreducible,  
then $\A$ requires the nSMP. If $\A$ is equivalent to a reducible pattern and the diagonal of $\A$ has two distinct symbols from $\{0,+,-\}$, then $\A$ requires the nSMP. If $\A$ is reducible and the two diagonal entries are zero, then 
$\A$ does not allow the nSMP, but if the two diagonals are equal but nonzero, then $\A$ allows the nSMP, but does not require the nSMP.
\end{theorem}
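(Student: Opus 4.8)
The plan is to break into the four mutually exclusive and exhaustive cases that the statement already anticipates: (i) $\A$ irreducible; (ii) $\A$ reducible with two distinct diagonal symbols; (iii) $\A$ reducible with both diagonal entries $0$; (iv) $\A$ reducible with both diagonal entries equal and nonzero. A preliminary remark makes the case split clean: for a $2\times 2$ pattern, being reducible just means $\A_{12}=0$ or $\A_{21}=0$, so ``equivalent to a reducible pattern'' and ``reducible'' coincide, and up to equivalence (permutation similarity and negation) we may always take $\A$ to be upper triangular; moreover the unordered pair $\{\A_{11},\A_{22}\}$ of diagonal symbols is either $\{0,0\}$, a repeated nonzero symbol, or a pair of distinct symbols, so (ii)--(iv) exhaust the reducible case.

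For case (i), strong connectivity of $D(\A)$ forces $\A_{12}\neq 0$ and $\A_{21}\neq 0$, so every off-diagonal entry of $\A$ is nonzero and Theorem~\ref{thm:hollow} immediately gives that $\A$ requires the nSMP. For case (ii), each $A\in Q(\A)$ is, after a permutation, upper triangular with diagonal entries whose signs are two distinct elements of $\{0,+,-\}$; hence the two diagonal entries are distinct real numbers, and since they are the eigenvalues of the triangular matrix $A$, the matrix $A$ has $2$ distinct eigenvalues. Thus $\A$ requires distinct eigenvalues, and Theorem~\ref{thm:requires} yields that $\A$ requires the nSMP. (One could equally argue directly from Theorem~\ref{thm:reducible}, since the two $1\times 1$ diagonal blocks of $A$ share no eigenvalue and each $1\times 1$ matrix has the nSMP.)

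Cases (iii) and (iv) I would settle with Theorem~\ref{thm:reducible} applied to the two $1\times 1$ diagonal blocks of a realization, using the elementary fact (recorded just before the theorem) that every $1\times 1$ matrix has the nSMP. In case (iii), for every $A\in Q(\A)$ the diagonal blocks are $[0]$ and $[0]$, which have the common eigenvalue $0$, so no realization of $\A$ has the nSMP; hence $\A$ does not allow the nSMP. In case (iv) we may assume (negating if necessary) that both diagonal symbols are $+$. Picking $A\in Q(\A)$ whose two positive diagonal entries are distinct gives diagonal blocks with no common eigenvalue, each with the nSMP, so by Theorem~\ref{thm:reducible} that $A$ has the nSMP and $\A$ allows the nSMP; picking $A\in Q(\A)$ whose two diagonal entries are equal gives diagonal blocks with a common eigenvalue, so by Theorem~\ref{thm:reducible} that $A$ does not have the nSMP and $\A$ does not require it.

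There is no serious obstacle in this argument; it is a finite bookkeeping exercise. The only points needing a little care are checking that the four cases are genuinely exhaustive, verifying that in case (ii) distinct diagonal symbols really do force distinct diagonal values (hence distinct eigenvalues) for every realization, and, in case (iv), writing down two concrete realizations in $Q(\A)$ — one with distinct diagonal entries and one with a repeated diagonal entry (both permissible regardless of whether the off-diagonal symbol is $0$, $+$, or $-$).
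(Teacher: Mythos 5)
Your proof is correct and follows essentially the same route as the paper: the irreducible case by citing an earlier ``requires'' theorem (you invoke Theorem~\ref{thm:hollow} where the paper cites Theorem~\ref{thm:cycle}; both apply), and all reducible cases by applying Theorem~\ref{thm:reducible} to the two $1\times 1$ diagonal blocks. The paper's proof is just a terser version of the same bookkeeping.
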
    

\begin{proof}
Suppose $\A$ is a $2\times 2$ pattern. If $\A$ is irreducible, then $D(\A)$ is a $2$-cycle with or without loops, and so $\A$ requires the nSMP by Theorem~\ref{thm:cycle}. If $\A$ is reducible, the result follows from Theorem~\ref{thm:reducible}.
\end{proof}

To efficiently view the pattern options, we use a 
relaxation of the notion of sign pattern (as introduced in \cite{CF}). Let $\rr=\{+,-,0,*,\oplu,\omin,\ostar\}$.
An \emph{$\rr$-pattern} is a matrix with entries in $\rr$.
The definition of $Q(\A)$ for a sign pattern $\A$ extends directly to a $\rr$-pattern. 
An $\rr$-pattern $\B$ is a \emph{relaxation} of an $\rr$-pattern $\A$ if $Q(\A)\subseteq Q(\B)$.
If $\B$ is an $\rr$-pattern, then $\A$ is a {\it fixed signing} of $\B$ if  $\B$ is a relaxation of $\A$ and $\A$ is a sign pattern. 

The results of Theorem~\ref{thm:order2} can now be described as follows:
A $2\times 2$ sign pattern requires the nSMP if and only if it is equivalent
to a fixed signing of
\[
\left[
\begin{array}{cc}
\ostar&*\\
*&\ostar
\end{array}
\right]
\quad \rm{\ or\ }
\quad
\left[
\begin{array}{cc}
+&\ostar\\
0&\omin
\end{array}
\right],
\]
depending on whether it is irreducible or not. And a $2\times 2$ pattern
allows, but does not require the nSMP if and only if it is equivalent to
a fixed signing of 
\[
\left[
\begin{array}{cc}
+&\ostar\\
0&+
\end{array}
\right].
\]
Finally, a $2\times 2$ sign pattern does not allow the nSMP if and only
if it is equivalent to a fixed signing of
\[
\left[
\begin{array}{cc}
0&\ostar\\
0&0
\end{array}
\right],
\]
by Corollary~\ref{cor:n-1}.

\begin{theorem}\label{thm:order3}
Suppose $\A$ is an irreducible $3\times  3$ sign pattern.
Then $\A$ requires the nSMP, unless $D(\A)$ is a star pattern with no non-centre loops and with oppositely signed $2$-cycles. In the latter case, $\A$ allows the nSMP but does not require the nSMP.
\end{theorem}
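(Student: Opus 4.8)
The plan is to reduce to a short case analysis driven by the underlying digraph. Since $\A$ is irreducible, $D(\A)$ is strongly connected, and deleting loops leaves a strongly connected digraph on three vertices; up to isomorphism (hence, by Lemma~\ref{lem:equiv} and the arc‑reversal equivalence, up to equivalence of the corresponding sign patterns) there are exactly five of these, conveniently sorted by their number of non‑loop arcs: (a) the directed $3$-cycle ($3$ arcs); (b) the star, i.e.\ two $2$-cycles meeting at a common vertex ($4$ arcs); (c) a single $2$-cycle together with one more arc that restores strong connectivity, equivalently a directed $3$-cycle with one ``chord'' ($4$ arcs); (d) the complete digraph with exactly one off‑diagonal arc removed ($5$ arcs); and (e) the complete digraph ($6$ arcs). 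I would first record this classification and observe that in each of (c), (d), (e) every pair of distinct vertices is joined by at least one arc, whereas in the star the two non‑centre vertices are joined by none — this is the structural feature responsible for the exceptional family.

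Next I would dispatch the three cases already covered by earlier results. If $D(\A)$ is of type (a), then $\A$ is a cycle pattern (possibly with loops) and requires the nSMP by Theorem~\ref{thm:cycle}. If $D(\A)$ is of type (e), then $\A$ has all off‑diagonal entries nonzero and requires the nSMP by Theorem~\ref{thm:hollow}. If $D(\A)$ is of type (b), then $\A$ is a $3\times 3$ star sign pattern; since there are only two non‑centre vertices, the ``at most two non‑centre loops of one sign'' hypothesis of Theorem~\ref{thm:star} holds automatically, so that theorem gives that $\A$ requires the nSMP unless both non‑centre vertices are loopless and the two incident $2$-cycles have opposite signs. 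In that exceptional subcase $D(\A)$ contains a composite $2$-cycle, so $\A$ allows the nSMP by Corollary~\ref{cor:n-1} (equivalently Corollary~\ref{cor:starallow}), while Example~\ref{ex:starloops} exhibits a realization without the nSMP; this is exactly the stated exception.

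It then remains to handle types (c) and (d), and for these I would prove the stronger fact that conditions~(\ref{eq:had}) and~(\ref{eq:commute}) alone force $X$ to be a scalar matrix, after which~(\ref{eq:trace}) with $k=0$ finishes. Using a permutation similarity I would place the zeros in a normal position — for type (d) take $\A_{13}=0$ with all other off‑diagonal entries nonzero, and for type (c) take $\A_{13}=\A_{32}=0$ with $\A_{12},\A_{21},\A_{23},\A_{31}$ nonzero — leaving the three diagonal symbols arbitrary. For $A\in Q(\A)$, condition~(\ref{eq:had}) leaves only the diagonal entries of $X$ and one (type (d)) or two (type (c)) prescribed off‑diagonal entries potentially nonzero, and a short inspection of a handful of off‑diagonal entries of $AX^T-X^TA$, each divided by the relevant nonzero entry of $A$, yields successively that the off‑diagonal part of $X$ vanishes and that all three diagonal entries of $X$ agree; thus $X=xI$ and ${\rm tr}(X^T)=3x=0$. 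No separate treatment of the diagonal is needed: a nonzero diagonal symbol only forces the matching $X_{ii}=0$ outright, which is consistent with the conclusion.

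The technical content of each of the two direct computations is minimal; the place where care is required is instead the bookkeeping — confirming that the five digraphs listed are genuinely all the irreducible $3\times 3$ possibilities and that the chosen normal forms are reachable by permutation similarity for every pattern in classes (c) and (d) — together with the correct reading of the (somewhat intricate) statement of Theorem~\ref{thm:star} in the two‑non‑centre‑vertex situation. Assembling Theorems~\ref{thm:cycle}, \ref{thm:hollow}, \ref{thm:star}, Corollary~\ref{cor:n-1}, and the two $3\times3$ computations then yields the result.
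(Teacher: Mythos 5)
Your proposal is correct, and its overall architecture is the paper's: classify the possible strongly connected digraphs on three vertices and dispatch the directed $3$-cycle, the complete digraph, and the star via Theorems~\ref{thm:cycle}, \ref{thm:hollow}, \ref{thm:star} and Corollary~\ref{cor:n-1} (your reading of Theorem~\ref{thm:star} in the two-non-centre-vertex situation, and the use of Example~\ref{ex:starloops} for the exceptional family, match the paper exactly). Where you genuinely diverge is the remaining case, a $3$-cycle together with at least one $2$-cycle but with some off-diagonal zero (your types (c) and (d)). The paper treats these uniformly: after a permutation putting a zero in position $(1,3)$ with $\A_{12},\A_{23},\A_{31}\neq 0$, the matrix is a proper Hessenberg matrix, hence nonderogatory, so Theorem~\ref{rem:Hessy} gives $X^T=c_0I+c_1A+c_2A^2$; then $(X^T)_{13}=0$ kills $c_2$, the presence of a $2$-cycle kills $c_1$, and the trace kills $c_0$. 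You instead compute $AX^T-X^TA$ entrywise in two normal forms and show conditions~(\ref{eq:had}) and~(\ref{eq:commute}) force $X$ scalar; I checked that this works (e.g.\ in your type (c) the $(1,2)$, $(1,3)$, $(2,1)$ and $(2,3)$ entries successively give $x_1=x_2$, $X_{32}=0$, $X_{13}=0$ and $x_2=x_3$). The paper's route is shorter and indifferent to how many off-diagonal zeros remain, so it needs no subdivision into (c) and (d) and no verification of two separate normal forms, at the cost of invoking the commutant theorem for nonderogatory matrices; yours is more elementary and self-contained but carries the extra bookkeeping burden you yourself identify.
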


\begin{proof} 
Suppose $\A$ is an irreducible $3\times 3$ sign pattern. 
If all the off-diagonal entries are nonzero, then $\A$ requires the nSMP by
Theorem~\ref{thm:hollow}.
If $D(\A)$ has no $3$-cycle, then $D(\A)$ is a star pattern and so the result follows from Theorem~\ref{thm:star} and Corollary~\ref{cor:n-1}.
If $D(\A)$ has no $2$-cycle
then $D(\A)$ is a $3$-cycle
and so $\A$ requires the nSMP by Theorem~\ref{thm:cycle}.
Suppose $D(\A)$ has a $3$-cycle and a $2$-cycle, but not every off-diagonal entry of $\A$ is nonzero. Then by permutation similarity, we may 
assume $\A_{1,3}=0$ and $\A_{1,2}\neq 0$, $\A_{2,3}\neq0$ and $\A_{3,1}\neq 0$. Let $A\in Q(\A)$ and $A\circ X=\O$ with $AX^T=X^TA$ and
${\rm{tr}}(X^T)=0$. Note that $A$ is a Hessenberg matrix and so by Theorem~\ref{rem:Hessy}, $X^T=c_0I+c_1A+c_2A^2$ for
some $c_0,c_1,c_2\in \R$. Since $(A^2)_{1,3}\neq 0$,
but $(X^T)_{13}=A_{1,3}=0$, $c_2=0$.  Thus $X=c_0I+c_1A^T$. Now $\O=A\circ X= A\circ (c_0I+c_1A^T)$. Thus, for $i\neq j$,
$c_1(A\circ A^T)_{ij}=0$. Since $D(\A)$ has at least
one $2$-cycle, $c_1=0$. Then $X^T=c_0I$. Hence $c_0=0$ by the trace condition. Therefore $X=\O$ and 
$\A$ requires the nSMP.
\end{proof}

The following remark follows  from Theorem~\ref{thm:reducible}.

\begin{remark}\label{rem:oplus1} Suppose $\A$ is a reducible pattern with one diagonal block $[0]$. Then $\A$ requires the nSMP if and only if $\A$ is equivalent to a superpattern of $\cS\oplus [0]$ where $\cS$ is a pattern that requires the nSMP and is sign-nonsingular. 
\end{remark}

In \cite[Theorem 1.10]{EJ1} it was observed that an $n \times n$ sign pattern $\A$ allows a positive real eigenvalue if and only if $D(\A)$ has has a positive simple cycle. 
Combining this observation with Theorem~\ref{thm:reducible}, we obtain the following remark.

\begin{remark}\label{rem:oplus}
Suppose $\A$ is a reducible pattern with one diagonal block $[+]$. Then $\A$ requires the nSMP if and only if $\A$ is equivalent to a  superpattern of $\cS \oplus [+]$ where $\cS$ is a pattern that requires the nSMP and $D(\cS)$ does not have a positive simple cycle.   
\end{remark}

By taking a sign pattern $\cS$ that requires the nSMP and requires a positive
eigenvalue, one can construct further patterns that allow
but do not require the nSMP.

\begin{example}
Let 
\[\cS =\left[
\begin{array}{ccc}
+&-&+\\+&0&-\\+&+&0\\
\end{array}
\right].
\]
Then $\cS$ requires a positive eigenvalue by \cite[Lemma 5.1]{KMT}. Further $\cS$ 
requires the nSMP by Theorem~\ref{thm:hollow}. Hence $\cS \oplus [+]$ allows the
nSMP but does not require then nSMP by Remark~\ref{rem:oplus} since, for any $B\in Q(\cS),$ the positive diagonal block $[+]$ can be scaled to either match a positive eigenvalue of $B$ or not. 
\end{example}

Using Remarks~\ref{rem:oplus1} and \ref{rem:oplus}, the following provides a characterization of the reducible $3\times 3$ sign patterns that allow the nSMP and those that require the nSMP.

\begin{theorem}
    A reducible $3\times 3$ sign pattern $\A$ requires the nSMP if and only if $\A$ is equivalent to a fixed signing of a pattern in Figure~\ref{fig:requires}.
\begin{figure}
\[
\begin{array}{cccccc}
\left[ 
\begin{array}{cc|c}
\ostar&*&\ostar\\
*&0&\ostar\\ \hline
0&0&0
\end{array}
\right] &
\left[ 
\begin{array}{cc|c}
+&\oplu&\ostar\\
+&-&\ostar\\ \hline
0&0&0
\end{array}
\right]
&
\left[ 
\begin{array}{cc|c}
+&+&\ostar\\
-&+&\ostar\\ \hline
0&0&0
\end{array}
\right]
&
\left[ 
\begin{array}{cc|c}
\omin&+&\ostar\\
-&\omin&\ostar\\ \hline
0&0&+
\end{array}
\right]&
\left[ 
\begin{array}{cc|c}
-&\ostar&\ostar\\
0&0&\ostar\\ \hline
0&0&+
\end{array}
\right]&
\left[ 
\begin{array}{cc|c}
0&\ostar&\ostar\\
0&-&\ostar\\ \hline
0&0&+
\end{array}
\right]\\
\A_1&\A_2&\A_3&\A_4&\A_5&\A_6
\end{array}
\]
\caption{Patterns whose fixed signings require the nSMP}\label{fig:requires}
\end{figure}
Further, a reducible $3\times 3$ sign pattern $\A$ allows but does not require the nSMP if and only if $\A$ is equivalent to a fixed signing of 
a pattern in Figure~\ref{fig:allow}.
\begin{figure}
\[
\begin{array}{cccccc}
\left[ 
\begin{array}{cc|c}
+&+&\ostar\\
+&+&\ostar\\ \hline
0&0&0
\end{array}
\right] &
\left[ 
\begin{array}{cc|c}
+&+&\ostar\\
-&-&\ostar\\ \hline
0&0&0
\end{array}
\right] &
\left[ 
\begin{array}{cc|c}
+&+&\ostar\\
-&\ostar&\ostar\\ \hline
0&0&+
\end{array}
\right] &
\left[ 
\begin{array}{cc|c}
\ostar&+&\ostar\\
+&\ostar&\ostar\\ \hline
0&0&+
\end{array}
\right] &
\left[ 
\begin{array}{cc|c}
+&\ostar&\ostar\\
0&\ostar&\ostar\\ \hline
0&0&+
\end{array}
\right]&
\left[ 
\begin{array}{cc|c}
\omin&\ostar&\ostar\\
0&+&\ostar\\ \hline
0&0&+
\end{array}
\right] \\
\B_1&\B_2&\B_3&\B_4&\B_5&\B_6
\end{array}
\] 
\caption{Patterns whose fixed signings allow but do not require the nSMP} \label{fig:allow}
\end{figure}
\end{theorem}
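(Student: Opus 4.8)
The plan is to carry out a finite case analysis of all reducible $3\times 3$ sign patterns, organized by the structure of the single $[\,\cdot\,]$ diagonal block appearing in the block-triangular form. Up to equivalence (permutation similarity, transposition, negation, signature similarity) a reducible $3\times 3$ pattern $\A$ is a superpattern of $\cS\oplus[\alpha]$ where $\cS$ is a $2\times 2$ pattern and $\alpha\in\{0,+\}$ (the case $\alpha=-$ reduces to $\alpha=+$ by negation). So I would split into the case $\alpha=0$, governed by Remark~\ref{rem:oplus1}, and the case $\alpha=+$, governed by Remark~\ref{rem:oplus}. In each case Theorem~\ref{thm:reducible} tells us $\A$ requires the nSMP iff the diagonal blocks have no common eigenvalue for every realization and each block requires the nSMP; since a $1\times 1$ block always has the nSMP, the content is entirely about $\cS$ (does it require the nSMP, and is it ``compatible'' with the $1\times 1$ block).

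First I would handle $\alpha=0$. By Remark~\ref{rem:oplus1}, $\A$ requires the nSMP iff $\A$ is equivalent to a superpattern of $\cS\oplus[0]$ with $\cS$ requiring the nSMP \emph{and} sign-nonsingular. From Theorem~\ref{thm:order2} (in the $\rr$-pattern form recorded just after it), the $2\times 2$ patterns requiring the nSMP are exactly the fixed signings of $\left[\begin{smallmatrix}\ostar&*\\ *&\ostar\end{smallmatrix}\right]$ (irreducible) and of $\left[\begin{smallmatrix}+&\ostar\\ 0&\omin\end{smallmatrix}\right]$ (reducible). Among these I must keep only the sign-nonsingular ones: the irreducible one $\left[\begin{smallmatrix}\ostar&*\\ *&\ostar\end{smallmatrix}\right]$ is sign-nonsingular (its determinant is $-$ times a product of two nonzeros plus an undetermined product, but actually the off-diagonal product always dominates in sign — more precisely its only composite $2$-cycle is the $2$-cycle, so it is sign-nonsingular), giving $\A_1$; the reducible $2\times 2$ pattern $\left[\begin{smallmatrix}+&\ostar\\ 0&\omin\end{smallmatrix}\right]$ is \emph{not} sign-nonsingular because the $\omin$ can be $0$, so it must be refined to fixed signings with that entry $-$, and the off-diagonal $2$-cycle must be ruled out as a composite cycle; the signings that survive are exactly $\A_2$ and $\A_3$ (the two ways, up to equivalence, to make $\left[\begin{smallmatrix}+&*\\ *&-\end{smallmatrix}\right]$ sign-nonsingular once one asks additionally for the nSMP, i.e.\ the $2$-cycle sign is forced). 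This reproduces $\A_1,\A_2,\A_3$, and every other superpattern either fails sign-nonsingularity or fails to have $\cS$ requiring the nSMP, hence only allows but does not require, landing in $\B_1,\B_2$.

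Next, $\alpha=+$. By Remark~\ref{rem:oplus}, $\A$ requires the nSMP iff $\A$ is equivalent to a superpattern of $\cS\oplus[+]$ where $\cS$ requires the nSMP and $D(\cS)$ has no positive simple cycle (so that no realization of $\cS$ shares the positive eigenvalue that $[+]$ contributes, using \cite[Theorem 1.10]{EJ1}). Again the candidates for $\cS$ come from the $2\times 2$ classification: the irreducible $\left[\begin{smallmatrix}\ostar&*\\ *&\ostar\end{smallmatrix}\right]$ — here ``no positive simple cycle'' forbids a positive loop, so both diagonal entries are $\omin$, and the $2$-cycle must be negative (a positive $2$-cycle is a positive simple cycle), giving exactly $\A_4$; the reducible $\left[\begin{smallmatrix}+&\ostar\\ 0&-\end{smallmatrix}\right]$ has the $+$ loop which is a positive simple cycle, so it fails — \emph{unless} we instead take the other reducible $2\times 2$ patterns requiring the nSMP that have their one diagonal block $[0]$, i.e.\ $\left[\begin{smallmatrix}-&\ostar\\ 0&0\end{smallmatrix}\right]$ and $\left[\begin{smallmatrix}0&\ostar\\ 0&-\end{smallmatrix}\right]$, which have no positive simple cycle, yielding $\A_5$ and $\A_6$. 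So $\alpha=+$ contributes precisely $\A_4,\A_5,\A_6$. Everything else with a $[+]$ block: if $\cS$ requires the nSMP but $D(\cS)$ \emph{does} have a positive simple cycle, then by Remark~\ref{rem:oplus} $\A$ allows but does not require the nSMP provided $\A$ still allows the nSMP, which by Corollary~\ref{cor:n-1} just needs a composite cycle of length $\ge 2$ — automatic here since $[+]$ plus anything nonzero in $\cS$ gives length $2$; these are the patterns $\B_3,\B_4,\B_5,\B_6$ (for $\B_3,\B_4$, $\cS$ irreducible with a positive loop or positive $2$-cycle; for $\B_5,\B_6$, $\cS$ reducible but having a positive loop). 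If $\cS$ merely \emph{allows} but does not require the nSMP (the $2\times 2$ pattern $\left[\begin{smallmatrix}+&\ostar\\ 0&+\end{smallmatrix}\right]$), then $\A$ allows the nSMP, and one checks it still only allows but does not require — this also folds into the $\B$ list. Finally, any $\A$ whose relevant $2\times 2$ block does not allow the nSMP (equivalent to a superpattern of $\left[\begin{smallmatrix}0&\ostar\\ 0&0\end{smallmatrix}\right]$, i.e.\ largest composite cycle $<2$) does not allow the nSMP by Corollary~\ref{cor:n-1}, so it appears on neither list; this shows the two figures together are not exhaustive of all reducible $3\times 3$ patterns, as expected.

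\textbf{Main obstacle.} The routine part is expanding Theorem~\ref{thm:order2}'s $2\times 2$ classification; the delicate bookkeeping is (i) correctly imposing ``sign-nonsingular'' (case $\alpha=0$) versus ``no positive simple cycle in $D(\cS)$'' (case $\alpha=+$) on top of ``$\cS$ requires the nSMP,'' and, crucially, (ii) verifying that the resulting lists are \emph{complete up to equivalence} — i.e.\ that negation, transposition, permutation similarity and signature similarity collapse all surviving patterns to exactly the six $\A_i$ and six $\B_j$ shown, with no omissions and no redundancies. The second point is where I expect to spend the most effort: one must be careful that, e.g., signature similarity can flip the sign of a $2$-cycle but not of a loop, so ``fixed signing of'' the displayed $\rr$-patterns genuinely captures each equivalence class exactly once. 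A clean way to close this is to enumerate $2\times 2$ reducible blocks by (sign of the two loops, sign of the off-diagonal $2$-cycle if present), apply the three criteria (nSMP-requiring; nonsingular-or-no-positive-cycle; allows-nSMP), and read off the orbits.
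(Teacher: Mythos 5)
Your overall strategy coincides with the paper's: reduce to a superpattern of $\cS\oplus[w]$ with $w\in\{0,+\}$, then impose Remark~\ref{rem:oplus1} (sign-nonsingularity of $\cS$) when $w=0$ and Remark~\ref{rem:oplus} (no positive simple cycle in $D(\cS)$) when $w=+$, on top of the $2\times 2$ classification from Theorem~\ref{thm:order2}. Your $w=+$ branch and your identification of the patterns that do not allow the nSMP agree with the paper. However, there is a genuine error in the $w=0$ branch: you assert that the irreducible pattern $\left[\begin{smallmatrix}\ostar&*\\ *&\ostar\end{smallmatrix}\right]$ is sign-nonsingular because ``its only composite $2$-cycle is the $2$-cycle.'' That is only true when at most one loop is present; if both diagonal entries are nonzero, the pair of loops is a second composite $2$-cycle and the determinant $a_{11}a_{22}-a_{12}a_{21}$ need not have fixed sign (e.g.\ $\left[\begin{smallmatrix}1&1\\ 1&1\end{smallmatrix}\right]$ is singular). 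As written, your argument would place $\left[\begin{smallmatrix}+&+\\ +&+\end{smallmatrix}\right]\oplus[0]$ and $\left[\begin{smallmatrix}+&+\\ -&-\end{smallmatrix}\right]\oplus[0]$ --- the paper's $\B_1$ and $\B_2$, which only allow the nSMP --- on the ``requires'' list.

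The case split you are missing is the one the paper makes for sign-nonsingular $2\times 2$ blocks $\cS$ requiring the nSMP: either (1) $D(\cS)$ has a $2$-cycle and at most one loop, where sign-nonsingularity is automatic (this, and only this, yields $\A_1$, whose $(2,2)$ entry is $0$, not $\ostar$); or (2) two loops and no $2$-cycle, where requiring the nSMP forces distinct diagonal signs, yielding $\A_2$ with a zero in position $(1,2)$; or (3) two loops and a $2$-cycle, where sign-nonsingularity forces the sign of the $2$-cycle to be the negative of the product of the loop signs --- this is precisely what produces the irreducible fixed signings of $\A_2$ and $\A_3$ and separates them from $\B_1,\B_2$. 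Your proposal instead derives $\A_2,\A_3$ by refining the reducible block $\left[\begin{smallmatrix}+&\ostar\\ 0&\omin\end{smallmatrix}\right]$, which cannot be right since the leading block of $\A_3$ is irreducible (both off-diagonal entries are nonzero), and ``ruling out the off-diagonal $2$-cycle'' is vacuous for a reducible block. The bookkeeping in this branch therefore needs to be redone along the lines above; the rest of the proposal, including the completeness-up-to-equivalence concern you flag, matches the paper's route.
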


\begin{proof}
Suppose $\A$ is a reducible $3\times 3$ sign pattern. By permutation similarity, we can assume that
$\A$ has the form
\[ 
\left[ \begin{array}{cc|c}  \multicolumn{2}{c|}{\multirow{2}{*}{$\cS$}} & \ostar \\ 
                        \multicolumn{2}{c|}{}&\ostar \\ \hline 
                        0&0&w
                        \end{array}
                        \right]
                        \]
with $w\in \{ 0,+,-\}$. By negation, we may assume that $w\in \{ 0, +\}$.
If $\A$ is a fixed signing of a pattern in Figures~\ref{fig:requires} and \ref{fig:allow} (with the exception of $\B_5$ with the (2,2) entry positive) the pattern $\cS$ requires
the nSMP either by Theorem~\ref{thm:cycle} since $D(\cS)$ is a $2$-cycle, or 
by Theorem~\ref{thm:reducible} since $\cS$ is reducible with distinct signs
on the diagonal.
Note that the pattern $\B_5$ with the (2,2) entry positive 
will allow any three positive real numbers as eigenvalues and so will allow, but not require the nSMP by Theorem~\ref{thm:reducible}.

We now determine the sign patterns that require the nSMP.
Suppose $w=+$. By Remark~\ref{rem:oplus}, $\A$ requires the nSMP if and only if $\cS$ requires the
nSMP and $D(\cS)$ does not have a positive simple cycle. If $D(\cS)$ does not have a $2$-cycle, then $\cS$ is a reducible pattern
with distinct non-positive  diagonal entries
and so $\A$ is equivalent to fixed signing of $\A_5$ or $\A_6$.  
If $D(\cS)$ has a $2$-cycle, then $\A$ is equivalent
to a fixed signing of $\A_4$. 

Suppose $w=0$. By Remark~\ref{rem:oplus1}, $\A$ requires the nSMP if and only if
$\cS$ requires the nSMP and $\cS$ is sign-nonsingular. 
If $\cS$ is sign-nonsingular, then either (1) $D(\cS)$ has a $2$-cycle and at most one loop, or (2) $D(\cS)$
has two loops and no $2$-cycle, or (3) $D(\cS)$ has two loops and a $2$-cycle with the sign of the $2$-cycle
the negative of the product of the signs of the loops. In case (1), $\A$ is equivalent to
a fixed signing of $\A_1$. In case (2), $\A$ is a equivalent to a fixed signing of $\A_2$ with a zero in position (1,2).
In case (3), $\A$ is equivalent to a fixed signing of $\A_2$ if the $2$-cycle is positive and
is equivalent to a fixed signing of $\A_3$ if the $2$-cycle is negative (using the
negation of $\A$ if the two loops are negative). Therefore, the fixed signings of the patterns in Figure~\ref{fig:requires} are precisely the reducible $3\times 3$ sign patterns that require the nSMP.

Next consider the patterns $\A$ that allow but do not require the nSMP.
First observe that every fixed signing of a pattern in Figure~\ref{fig:allow} 
allows the nSMP by Corollary~\ref{cor:n-1} since the digraph of each contains
a composite $2$-cycle. 

If $\cS$ is reducible and $\A$ allows a matrix with a repeated eigenvalue, then $\A$ is equivalent to a fixed signing of $\B_5$ or $\B_6$. These patterns also allow matrices with distinct eigenvalues.
By Theorem~\ref{thm:reducible}, 
$\B_5$ and $\B_6$ will allow the nSMP, but will not require the nSMP. 

Suppose $w=0$. By Remark~\ref{rem:oplus1}, $\cS$ must allow singularity if $\A$ does not require the nSMP. If $\cS$ allows singularity and $\cS$ is reducible, then $\A$ would not allow the nSMP by Theorem~\ref{thm:reducible}. 
Thus $D(\cS)$ has a $2$-cycle. In this case, $\cS$ has no zero entries. Since the rows of any matrix in $Q(\cS)$ must allow
linear dependence to allow singularity, $\A$ must be equivalent to a fixed signing of either $\B_1$ or $\B_2$ (noting
that if the two rows of $\cS$ both have alternating signs, then by signature similarity, the rows will no longer have alternating signs).

Suppose $w=+$. By Remark~\ref{rem:oplus}, $\cS$ must have a positive simple cycle
if $\A$  does not require the nSMP. If $\cS$ has no $2$-cycle, then $\A$ is equivalent
to a fixed signing of $\B_5$ or $\B_6$. If $\cS$ has a positive $2$-cycle, then $\A$ is equivalent
to a fixed signing of $\B_4$ (using signature similarity if both arcs of the $2$-cycle are negative). 
If $\cS$ has a negative $2$-cycle, then $\cS$ must have a positive loop and $\A$ 
is equivalent to a fixed signing of $\B_3$. 

Therefore, the fixed signings  of the patterns in Figure~\ref{fig:allow} are precisely the reducible $3\times 3$ sign patterns that allow but do not require the nSMP.
\end{proof}

\section{Concluding comments}\label{sec:conclude}

In this paper, we determined exactly which patterns do not allow the nSMP, and those that do
allow the nSMP, in terms of a simple feature of the digraph of the pattern. 
We also provide several classes of patterns that require the nSMP, as well as classes of patterns that allow but do not require the nSMP. In the future, further work can be done to determine features that demonstrate a pattern requires the nSMP. Future work could also explore characteristics of matrices that do not have the nSMP when the matrix has a pattern that allows the nSMP. Finally, as per Remark~\ref{rem:oplus1}, it would be of interest to explore which sign-nonsingular patterns require the nSMP. Example~\ref{ex:bipartite1} demonstrates that not every sign-nonsingular pattern requires the nSMP.

A corresponding problem is the symmetric version, when a \emph{graph} requires the strong multiplicity property (see e.g. \cite{bifurcate, LOS}). The results are different for such a case due to the extra constraint of matrix symmetry and the added restriction that $X\circ I=\O$ for the graph version. The investigation of when a graph requires the strong multiplicity property was first approached in \cite{LOS}.

\bigskip

\end{document}